\documentclass{amsart}
\usepackage{amssymb}

\usepackage{graphicx}
\usepackage{color}

\usepackage{enumerate}
\usepackage[all]{xy}

\newtheorem{theorem}{Theorem}[section]
\newtheorem{lemma}[theorem]{Lemma}
\newtheorem{prop}[theorem]{Proposition}
\newtheorem{corollary}[theorem]{Corollary}
\theoremstyle{definition}
\newtheorem{definition}[theorem]{Definition}
\newtheorem{example}[theorem]{Example}

\theoremstyle{remark}
\newtheorem{remark}[theorem]{Remark}

\numberwithin{equation}{section}

\newcommand{\Ho}{\mathrm{Ho}}
\newcommand{\Hom}{\mathrm{Hom}}
\newcommand{\End}{\mathrm{End}}
\newcommand{\Ker}{\mathrm{Ker}}

\newcommand{\Img}{\mathrm{Im}}

\newcommand{\Sh}{\mathit{Sh}}

\newcommand{\xra}[1]{\xrightarrow{#1}}
\newcommand{\lra}{\longrightarrow}

\newcommand{\wt}{\widetilde}

\newcommand{\ov}[1]{\overline{#1}}

\newcommand{\ch}[1]{\mathrm{C^*_{#1}}}
\newcommand{\chb}[2]{\mathrm{C^{#1}_#2}}

\newcommand{\Einf}{\mathbf{E}}
\newcommand{\TW}{\text{\tiny{$\mathrm{TW}$}}}
\newcommand{\Sch}[1]{\mathrm{Sch}_{#1}}
\newcommand{\Sm}[1]{\mathrm{Sm}_{#1}}
\newcommand{\Com}{\mathcal{C}om}

\newcommand{\FF}{\mathbb{F}}
\newcommand{\CC}{\mathbb{C}}
\newcommand{\HH}{\mathbb{H}}

\newcommand{\PP}{\mathbb{P}}
\newcommand{\QQ}{\mathbb{Q}}
\newcommand{\RR}{\mathbb{R}}

\newcommand{\ZZ}{\mathbb{Z}}

\newcommand{\Aa}{\mathcal{A}}
\newcommand{\Bb}{\mathcal{B}}
\newcommand{\Cc}{\mathcal{C}}
\newcommand{\Dd}{\mathcal{D}}
\newcommand{\Ee}{\mathcal{E}}
\newcommand{\Ff}{\mathcal{F}}
\newcommand{\Gg}{\mathcal{G}}

\newcommand{\Ii}{\mathcal{I}}

\newcommand{\Ll}{\mathcal{L}}

\newcommand{\Nn}{\mathcal{N}}
\newcommand{\Oo}{\mathcal{O}}
\newcommand{\Pp}{\mathcal{P}}
\newcommand{\Qq}{\mathcal{Q}}
\newcommand{\Ss}{\mathcal{S}}
\newcommand{\Vv}{\mathcal{V}}
\newcommand{\Ww}{\mathcal{W}}

\newcommand{\Zz}{\mathcal{Z}}

\newcommand{\kk}{\mathbf{K}}

\begin{document}

\title[Sheaves of $E$-infinity algebras and applications]{Sheaves of $E$-infinity algebras and applications to 
 algebraic varieties and singular spaces}

\author{David Chataur}
\address[D. Chataur]{Laboratoire Ami\'{e}nois de Math\'{e}matique Fondamentale et Appliqu\'{e}e\\  Universit\'{e} de Picardie Jules Verne\\ 
33, rue Saint-Leu 80039 Amiens Cedex 1, France}
\email{david.chataur@u-picardie.fr}
\thanks{D. Chataur would like to thank the CRM and IMUB for their hospitality.}

\author{Joana Cirici}
\address[J. Cirici]{
Departament de Matem\`{a}tiques i Inform\`{a}tica\\ 
Universitat de Barcelona (UB)\\
Gran Via de les Corts Catalanes 585\\
08007 Barcelona}
\email{jcirici@ub.edu}
\thanks{J. Cirici would like to acknowledge partial support from the project AEI/FEDER, UE (PID2020-117971GB-C22) and the Serra H\'{u}nter Program.}

\subjclass[2020]{18D50, 32S35, 55N33}

\begin{abstract}
We describe the $E$-infinity algebra structure on the complex of singular cochains of a topological space, in the context of sheaf theory. As a first application, for any algebraic variety we define a weight filtration compatible with its $E$-infinity structure. This naturally extends the theory of mixed Hodge structures in rational 
homotopy to $p$-adic homotopy theory.
The spectral sequence associated to the weight filtration gives a new family of algebraic invariants of the varieties for any coefficient ring, carrying Steenrod operations.
As a second application, we promote Deligne's intersection complex computing intersection cohomology,
to a sheaf carrying E-infinity structures.
This allows for a natural interpretation of the Steenrod operations defined on the
intersection cohomology of any topological pseudomanifold.
\end{abstract}

\maketitle

\section{Introduction}
The singular cochain complex $C^*(X,R)$ of every topological space $X$,
with coefficients in a commutative ring $R$, has a natural structure of an $E_\infty$-algebra.
This structure encodes the commutativity of the cup product at the cochain level up to higher coherent homotopies
and turns out to be extremely powerful in order to classify homotopy types: as
shown by Mandell in \cite{Mandell}, when $R=\FF_p$ and for sufficiently nice spaces, the $E_\infty$-structure
captures the $p$-adic homotopy theory of the space.
This result can be understood as an algebraization
of $p$-adic homotopy theory, analogous to Sullivan's approach to 
rational homotopy via commutative dg-algebras.
Moreover, finite type nilpotent spaces are weakly equivalent if and only if
their singular cochains over $\ZZ$ are quasi-isomorphic as $E_\infty$-algebras
\cite{Mandell2}.

In this paper, we describe the $E_\infty$-algebra structure on $C^*(X,R)$ via sheaf theory.
The study of sheaves of $E_\infty$-algebras is not new and some related constructions appear
in \cite{HiSc}, \cite{ManCochains}, \cite{MaySheaf} and more recently in \cite{RR0}, \cite{RR} and \cite{Petersen}. 
In particular, Mandell used the
theory of homotopy limits of operadic algebras developed by Hinich and Schechtman
to define a cosimplicial normalization functor
in the category of $E_\infty$-algebras over $R$.
The same construction was used by May to outline 
a theory of $E_\infty$-algebras and \u{C}ech cochains as an approach to sheaf cohomology. More recently, Petersen has defined 
a global sections functor for sheaves of $E_\infty$-algebras using 
the theory of twisted algebras.
However, these studies are not sufficient for our purposes.
In this paper, we combine the cosimplicial normalization functor with 
Godement's cosimplicial resolution functor to 
define derived direct image $\RR_\Ee f_*$ and derived global sections $\RR_\Ee\Gamma(X,-)$ functors 
for sheaves of $E_\infty$-algebras 
on topological spaces.
This idea goes back to Godement, who already claimed in \cite{Godement} that 
the good behavior with respect to products, of his canonical cosimplical resolutions by flasque sheaves,
should allow one to study Steenrod operations in the context of sheaf theory.

We show that the assignment 
$X\mapsto \RR_\Ee\Gamma(X,\underline{R}_X)$, where $\underline{R}_X$ denotes the constant sheaf of $R$-modules on $X$,
defines a functor from the category of Hausdorff,
paracompact and locally contractible topological spaces, to the category of $E_\infty$-algebras, 
which is naturally quasi-isomorphic to the functor $C^*(-,R)$ defined by the complex of singular
cochains with its $E_\infty$-algebra structure (Theorem \ref{quistocochains_thm}).
In particular, the above functor is a cochain theory
in the sense of Mandell \cite{ManCochains}. Such a cochain theory is a
cochain-level refinement
of the Eilenberg-Steenrod axioms, defined as a contravariant functor from spaces to cochain complexes,
satisfying homotopy, excision, product and dimension axioms analogous to the usual axioms for cohomology.
If such a cochain theory lifts to the category of $E_\infty$-algebras,
then this lift is uniquely quasi-isomorphic to the 
usual functor of singular cochains.

When $\QQ=R$, the above recover results of Navarro-Aznar \cite{Na},
who used the Thom-Whitney simple functor to give a solution to the commutative 
cochains problem over the field of rational numbers via sheaf theory.

As already pointed out by May in \cite{MaySheaf}, the study of sheaves of $E_\infty$-algebras
has many potential applications to algebraic geometry. This is where the weight of this paper lies. We develop two main applications: to the weight filtration for algebraic varieties and to the intersection cohomology for topological pseudomanifolds.
Both of these theories owe much to Deligne and are based on sheaf-theoretic considerations.
\\

\textbf{The weight filtration}.
Deligne \cite{DeHII}, \cite{DeHIII} introduced a functorial increasing filtration $W$ on
the rational cohomology $H^*(X,\QQ)$ of every complex algebraic variety $X$, called the \textit{weight filtration}. 
This filtration relates the cohomology of the variety with cohomologies of smooth projective varieties.
The successive quotients
of this
filtration become pure Hodge structures of different weights, 
leading to the notion of \textit{mixed Hodge structure}.

The study of the rational homotopy of complex algebraic varieties using Deligne's mixed Hodge theory
and in particular, the weight filtration, has proven to be very fruitful.
The initial findings are due to Morgan \cite{Mo}, who
refined Deligne's construction to endow commutative dg-algebra models of smooth 
varieties with mixed Hodge structures. By introducing the Thom-Whitney simple functor and using the theory of cohomological descent, 
Navarro-Aznar \cite{Na} extended Morgan's theory to possibly singular varieties.
These constructions have led to numerous topological consequences, mostly related to the formality of algebraic varieties in the sense of rational homotopy.

The weight filtration has also been defined on the cohomology
with coefficients in an arbitrary commutative ring (see \cite{GilletSoule}, \cite{GN}). 
Also, Totaro observed in his ICM address \cite{To}, that the weight filtration
could also be defined on the cohomology with compact 
 supports of any complex or real analytic space endowed with an equivalence class of compactifications. 
 Totaro's remarks have been made precise in \cite{MCPI}, \cite{MCPII} 
 for real algebraic varieties and in \cite{CG3} for complex and real analytic spaces.
 Also, in \cite{LiPr}, cap and cup products are studied for the weight filtration on 
 compactly supported $\FF_2$-cohomology of real algebraic varieties.
 In view of these results, it is natural to ask for a weight filtration defined at the cochain level,
with coefficients in an arbitrary commutative ring, and compatible with the $E_\infty$-structure, thus generalizing all of the above constructions.

We obtain this weight filtration using the extension criterion of functors of \cite{GN}. This 
is based on the assumption that the target category
is a \textit{cohomological descent category} which, essentially, is a category $\Dd$ endowed with a saturated class of \textit{weak equivalences},
and a \textit{simple functor} sending every cubical codiagram of $\Dd$ to an object of $\Dd$ and
satisfying certain axioms analogous to those of the total complex of a double complex.
In Theorem \ref{descent1}, we use Mandell's cosimplicial normalization functor to obtain a cohomological descent structure on the category of $E_\infty$-algebras 
and prove a filtered version of this result in Theorem \ref{cohdesccats}.

The basic idea of the extension criterion of functors is that, given a functor compatible with smooth
blow-ups from the category of smooth schemes to a cohomological descent category, there
exists an extension to all schemes. Such an extension is essentially unique
and is compatible with general blow-ups. Here, we use a relative version of this result, which extends functors from the category of pairs $(X,U)$ where $j:U\hookrightarrow X$ is a smooth compactification of a smooth open variety $U$.
Given such a pair, we define the weight filtration
by taking the canonical filtration on $\RR_{\Ee}j_*\underline{R}_U$, thus promoting Deligne's approach to the multiplicative setting. After extension, this gives a functor
$\Ww_R$ from the category of complex schemes to a certain homotopy category of filtered $E_\infty$-algebras (Theorem \ref{mainteoweight}).
The \textit{weight spectral sequence} of $X$ is then defined as the 
spectral sequence associated to the filtered $E_\infty$-algebra $\Ww_R(X)$.

Over the rationals, the weight spectral sequence always degenerates at the $E_2$ page. 
However, this is not the case when working with arbitrary coefficients.
 Nevertheless, it converges to the cohomology of $X$ and from the second stage onwards, any page 
is a new and well-defined algebraic invariant of the variety, which carries a 
commutative bigraded structure as well as Steenrod and reduced $\ell$-th 
power operations when working over $\FF_2$ and $\FF_\ell$ with $\ell$ a prime odd number respectively.
In \cite{To}, Totaro speculates that the mixed
motive of a real analytic space should not involve much more information than 
the weight spectral sequence over $\FF_2$ with an action of a Steenrod algebra.
While the verification of this idea is beyond the scope of the present work,
we do extend our constructions to real and complex analytic spaces
showing that 
the $E_2$-term of the weight spectral sequence carries Steenrod operations
which are well-defined invariants for these spaces.
We develop some examples illustrating that this is a new non-trivial invariant.
\medskip

\textbf{Intersection cohomology}.
Intersection cohomology is a Poincar\'{e} duality cohomology theory for singular spaces,
introduced by Goresky and MacPherson.
It is defined for any topological pseudomanifold and
depends on the choice of a multi-index called \textit{perversity}, measuring how far cycles are
allowed to deviate from transversality. 
Goresky and MacPherson, first defined intersection homology in \cite{GMP1} as the homology
$IH^{\overline p}_*(X)$ of a subcomplex $IC_*^{\overline{p}}(X)$ of the ordinary chains of $X$,
given by those cycles which meet the singular locus of $X$ with a
controlled defect of transversality with respect to the chosen perversity and stratification.
Subsequently, Deligne proposed a sheaf-theoretic approach,
which was developed in \cite{GMP2}. In this case, 
intersection cohomology is defined as the hypercohomology
a complex of sheaves  $\Ii C_{\ov p}^*(X)$ defined by starting with the constant sheaf on the 
regular part of $X$ and iteratively extending it to bigger open sets determined by the stratification and truncating
it in the derived category with respect to the chosen perversity.
The intersection complex $\Ii C_{\ov p}^*(X)$ is uniquely characterized
by a set of axioms in the derived category of sheaves.

Intersection cohomology does not define an algebra, but it has a product compatible with perversities.
In order to promote Deligne's additive sheaf to a
sheaf carrying an $E_\infty$-algebra structure,
we adapt Hovey's \cite{Hov2} formalism of perverse algebras to
define the notion of \textit{perverse $E_\infty$-algebra}
as an $E_\infty$-algebra internal to the category of functors from the poset of perversities to cochain complexes.
We then construct a sheaf of perverse $E_\infty$-algebras whose underlying sheaf of complexes is Deligne's intersection complex and show, in Theorem \ref{uniquenessIC}, that this is uniquely characterized
by a set of axioms in the homotopy category of perverse $E_\infty$-algebras.
There exist different constructions related to the presence of
multiplicative structures in intersection cohomology, starting by Goresky and MacPherson's 
study of cup products in intersection cohomology. There is also an ad. hoc. construction 
of Steenrod operations in intersection cohomology due to Goresky \cite{Goresky}.
Also, recent work of the first
author together with Saralegi and Tanr\'{e}
studies questions related to multiplicative aspects of intersection cohomology, both over $\QQ$ and over $\FF_2$ (see \cite{CSTGor}, \cite{CSTBl},\cite{CST}, \cite{CSTSheaf}).
Our uniqueness result ensures that the intersection complex of perverse $E_\infty$-algebras defined in this paper, recovers all of the above. In particular, when working over the field of rational numbers, 
we
obtain a sheaf of commutative dg-algebras, which gives a solution to the problem of commutative cochains in the context of intersection cohomology.

The techniques developed in this paper have other potential applications. For instance, 
one could  also endow the functors of nearby and vanishing cycles with $E_\infty$-structures, as done in the last part of \cite{Na} for the rational case.
Also, Banagl's \cite{Banagl} theory of intersection space complexes and a
recent generalization of Banagl's theory due to
Agust\'{i}n and Fern\'{a}ndez de Bobadilla \cite{AgBo} should also allow for a treatment in the multiplicative setting.

\medskip
The paper is organized as follows.
 In Section \ref{sheaveseinfty} we develop the necessary tools of sheaf theory for $E_\infty$-algebras.
 In Section \ref{SecFil} we study filtered $E_\infty$-algebras. We define filtered versions of the cosimplicial normalization and study the interaction of Steenrod operations with spectral sequences.
 These results are applied in Section \ref{SecWeight}, where we define the weight filtration 
 at the cochain level. Lastly, Section \ref{SecInter} is devoted to intersection cohomology. 
 This last section can be read independently of Sections \ref{SecFil} and \ref{SecWeight}.

\section{Sheaves of $E_\infty$-algebras}\label{sheaveseinfty}
 In this section, we combine Mandell's \cite{ManCochains} cosimplicial normalization functor for $E_\infty$-algebras 
 together with Godement's \cite{Godement} cosimplicial resolution functor for sheaves of complexes  to 
 study sheaves of $E_\infty$-algebras and their cohomology.
 We define derived direct image and global sections functors
for sheaves of $E_\infty$-algebras and use them to describe the $E_\infty$-structure on
the complex of singular cochains of a topological space,
in terms of a resolution of its constant sheaf.

 \subsection{Preliminaries on $E_\infty$-algebras}
 Throughout this paper, $R$ will be a commutative ring. All operads considered will be in the category $\ch{R}$ of cochain 
 complexes of $R$-modules.
 
\begin{definition}
  Let $\Com$ denote the operad of commutative algebras, given in each arity $n\geq 0$, by $\Com(n)=R$ concentrated in degree 0,
 with $\Sigma_n$ acting by the identity.
 \begin{enumerate}[(1)]
  \item An operad $\Oo$ is called \textit{acyclic} if there is a map of operads $\Oo\to \Com$ such that in each arity $n$, the map $\Oo(n)\to \Com(n)$ is a quasi-isomorphism.
  \item An operad $\Oo$ is called \textit{$\Sigma$-free} if for each arity $n$, the underlying graded $R[\Sigma_n]$-module of $\Oo(n)$ is free in each degree.
  \item An \textit{$E_\infty$-operad} is a $\Sigma$-free acyclic operad 
  $\Oo\to \Com$ such that each graded module $\Oo(n)$ is concentrated in non-positive degrees.
 \end{enumerate}
\end{definition}

Throughout this paper we will let $\Ee$ be a cofibrant $E_\infty$-operad. 
Recall that being cofibrant means that $\Ee$ has the lifting property with respect to
maps of operads that are surjective quasi-isomorphisms (see for instance \cite{Hinich}).

Denote by $\Einf_R$ the category of $\Ee$-algebras over $R$. An object of $\Einf_R$ is 
given by a cochain complex together with an action of the $E_\infty$-operad $\Ee$.
We will mostly restrict to $\Ee$-algebras
whose underlying cochain complex is non-negatively graded.

The category $\Einf_R$ is complete
and cocomplete. Limits and filtered colimits
commute with the forgetful functor to cochain complexes.
We will denote by $\Qq$ the class of quasi-isomorphisms of $\Einf_R$-algebras (those morphisms of $\Ee$-algebras inducing quasi-isomorphisms at the cochain level) and by $$\Ho(\Einf_R):=\Einf_R[\Qq^{-1}]$$ the homotopy category
defined by formally inverting quasi-isomorphisms.

\subsection{Cosimplicial normalization functor for $E_\infty$-algebras}
We recall Mandell's construction of the cosimplicial normalization functor for $\Ee$-algebras \cite{ManCochains}. 

Consider the functor of normalized cochains, sending every cosimplicial cochain complex
to a cochain complex,
$$N:\Delta \ch{R}\lra \ch{R}.$$ This is given
by the total complex of the double complex obtained by normalizing
degree-wise. When dealing with unbounded complexes, the total complex is constructed using
the cartesian product, rather than the direct sum. 
The following description of the normalized cochains functor will be useful.
Denote by \[C_*(\Delta^n):=N(R[\Delta^n])\] the normalized chain complex of $R$-modules of the standard $n$-simplex simplicial set,
where \[R[-]:\Delta^{op}\mathrm{Set}\lra \Delta^{op} R\text{-}\mathrm{mod}\] denotes the free $R$-module functor, left adjoint to the forgetful.
We will often consider $C_*(\Delta^n)$ as a non-positively graded cochain complex.
Denote by $\Hom(-,-)$ the internal hom of cochain complexes. The following proposition is well-known:
\begin{prop}
If $A^{\bullet}$ is a cosimplicial cochain complex, then $N(A^{\bullet})$
may be identified with the end 
$$N(A^{\bullet})=\int_\alpha \Hom(C_*(\Delta^\alpha),A^{\alpha})$$
of the functor $\Delta^{op}\times \Delta\lra \ch{R}$
given by $(\alpha,\beta)\mapsto \Hom(C_*(\Delta^\alpha),A^{\beta})$.
\end{prop}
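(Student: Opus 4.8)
The plan is to unfold the end as an equalizer and identify it, as a cochain complex, with the conormalized complex of $A^\bullet$. Recall that $\int_\alpha \Hom(C_*(\Delta^\alpha),A^\alpha)$ is the equalizer of the two maps $\prod_n \Hom(C_*(\Delta^n),A^n)\rightrightarrows \prod_{\theta:[m]\to[n]}\Hom(C_*(\Delta^m),A^n)$ induced by post-composition with $A(\theta)$ and by pre-composition with $C_*(\theta)$. Thus an element in degree $d$ is a family $(f_\alpha:C_*(\Delta^\alpha)\lra A^\alpha)_{[\alpha]\in\Delta}$ of internal-hom elements satisfying the wedge condition $A(\theta)\circ f_m=f_n\circ C_*(\theta)$ for every $\theta:[m]\to[n]$. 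I would show that such families are in natural bijection with the elements of the conormalized subcomplex $N(A^\bullet)$, where $N^n(A)=\bigcap_{i}\ker\big(s^i:A^n\to A^{n-1}\big)$, and that this bijection is an isomorphism of cochain complexes.

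The key observation is that $C_*(\Delta^n)$ is freely generated, in simplicial degree $k$, by the nondegenerate $k$-simplices of $\Delta^n$, i.e.\ by the injective monotone maps $[k]\hookrightarrow[n]$; in particular $C_*(\Delta^n)$ has a single top generator, the identity $\iota_n\in\Delta^n_n$. First I would use the wedge condition for an injection $\sigma:[k]\hookrightarrow[n]$, evaluated on $\iota_k$, to get $f_n(\sigma)=A(\sigma)\big(f_k(\iota_k)\big)$; since every generator of $C_*(\Delta^n)$ arises this way, the whole family is determined by the top evaluations $x_k:=f_k(\iota_k)\in A^k$. Conversely, given elements $x_k$ one sets $f_n(\sigma):=A(\sigma)(x_k)$ on nondegenerate simplices. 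The assignment $(f_\alpha)\mapsto(x_k)_k$ is the desired bijection onto the underlying graded module of $N(A^\bullet)$, once the constraints below are taken into account.

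The heart of the matter --- and the step I expect to require the most care --- is matching the normalization and the differentials. The wedge condition for the codegeneracies $\sigma^i:[n]\to[n-1]$, evaluated on $\iota_n$, reads $s^i(x_n)=f_{n-1}\big(C_*(\sigma^i)(\iota_n)\big)$; but $C_*(\sigma^i)(\iota_n)$ is a degenerate simplex of $\Delta^{n-1}$, hence zero in normalized chains, so $s^i(x_n)=0$ and the $x_n$ indeed lie in $\bigcap_i\ker s^i$. Dually, for the converse one must check that the family $f_n(\sigma)=A(\sigma)(x_k)$ satisfies the wedge condition for \emph{all} $\theta$, including non-injective ones: factoring $\theta\circ\tau$ as an epimorphism followed by a monomorphism, the degenerate case produces a composite of codegeneracies applied to some $x_k$, which vanishes precisely because $x_k\in\bigcap_i\ker s^i$. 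This is where the conormalization condition is forced on both sides. Finally, evaluating the internal-hom differential of $f$ on $\iota_n$ and using $d\,\iota_n=\sum_i(-1)^i\,\delta^i$ in $C_*(\Delta^n)$ reproduces the total differential of $N(A^\bullet)$, namely the internal differential of $A$ combined with the alternating sum of cofaces $\sum_i(-1)^i d^i$. The remaining obstacle is purely a matter of tracking Koszul signs between the internal-hom grading, the simplicial grading of $C_*(\Delta^\bullet)$ and the cochain/cosimplicial bigrading of $N$; I would pin these down by first treating a cosimplicial $R$-module concentrated in a single degree --- where the statement is exactly the dual Dold--Kan correspondence --- and then recovering the general case by passing to total complexes, since internal hom, ends and $N$ all commute with this operation.
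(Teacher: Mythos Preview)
The paper does not actually prove this proposition: it is stated as ``well-known'' and no argument is supplied. Your proof is correct and is the standard one---reducing an element of the end to its top evaluations $x_n=f_n(\iota_n)$, using the wedge condition for codegeneracies $\sigma^i$ together with the fact that $C_*(\sigma^i)(\iota_n)$ is degenerate (hence zero in normalized chains) to force $x_n\in\bigcap_i\ker s^i$, and reading off the total differential from $d\iota_n=\sum_i(-1)^i\delta^i$. Your closing strategy of first treating a cosimplicial module (dual Dold--Kan) and then totalizing is the usual way to pin down the signs, and also accounts for the paper's convention that the total complex is formed with products rather than direct sums when $A^\bullet$ is unbounded.
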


A key ingredient to promote the cosimplicial normalization functor from cochain complexes to $\Ee$-algebras is the 
Eilenberg-Zilber operad, which we recall next (see \cite{HiSc}, see also \cite{MaySheaf}).
\begin{definition}
The \textit{Eilenberg-Zilber operad} $\Zz=\End_\Lambda$ is the endomorphism operad  of the functor
 \[\Lambda:\Delta\lra \ch{R}\text{ defined by }
[n]\mapsto C_*(\Delta^n).\]
In each arity
$n\geq 0$,
$\Zz(n)$ is the normalization of the cosimplicial (non-positively graded) cochain complex  $C_*(\Delta^\bullet)^{\otimes n}$.
We may write:
$$\Zz(n)=\int_\alpha \Hom(C_*(\Delta^\alpha),C_*(\Delta^\alpha)^{\otimes n}).$$ 
In particular, we have $\Zz(0)\cong R$. 
The structure maps
$$\Zz(n)\otimes \Zz(j_1)\otimes \cdots \otimes \Zz(j_n)\lra \Zz(j_1+\cdots + j_n)$$
of $\Zz$ are given by the \textit{generalized Alexander-Whitney maps}
$$f\otimes(f_1\otimes \cdots \otimes f_n)\mapsto (-1)^{|f|\cdot \sum |f_i|} (f_1\otimes \cdots \otimes f_n)\circ f,$$
where $|g|$ denotes the degree of the map $g$.
\end{definition}

The main theorem of \cite{HiSc} (see also Theorem 5.5 of \cite{ManCochains}) is the following.
\begin{theorem}\label{algstrN}Let $\Oo$ be an operad and 
 $A^\bullet$ a cosimplicial $\Oo$-algebra.
Then $N(A^\bullet)$ is a $(\Oo\otimes\Zz)$-algebra, that is natural in maps of the
operad $\Oo$ and of the cosimplicial $\Oo$-algebra $A^\bullet$.
\end{theorem}
\begin{proof}
We explain the main steps for obtaining the algebra structure on $N(A^\bullet)$.
Let $\theta$ be the action maps of $\Oo$ on $A^\bullet$ and denote by
$$\alpha_n:\Zz(n)\otimes N(A^\bullet)\otimes\cdots \otimes N(A^\bullet)\lra N(\mathrm{diag}(A^\bullet\otimes \cdots \otimes A^\bullet))$$
the map given by 
$$f\otimes(a_1\otimes\cdots\otimes a_n))\mapsto (-1)^{|f|\cdot \sum |a_i|}(a_1\otimes \cdots\otimes a_n)\circ f.$$
Define action maps of $\Oo\otimes \Zz$ on $N(A^\bullet)$ by the composition
$$
\xymatrix{
\Oo(n)\otimes \Zz(n)\otimes N(A^\bullet)\otimes\cdots \otimes N(A^\bullet)\ar[d]^{Id_{\Oo(n)}\otimes \alpha_n}\\
\Oo(n)\otimes N(\mathrm{diag}(A^\bullet\otimes\cdots\otimes A^\bullet))\ar[d]^{\zeta}\\
N(\Oo(n)\otimes \mathrm{diag}(A^\bullet\otimes \cdots \otimes A^\bullet))\ar[d]^{N(\theta)}\\
N(A^\bullet),
}
$$
where $\zeta$ is induced by the natural morphism of cochain complexes 
$$\zeta:A\otimes \Hom(B,C)\to \Hom (B,A\otimes C)$$ given by 
$\zeta(x\otimes f)(y)=x\otimes f(y)$. One may verify that these maps 
satisfiy the axioms of associativity, unit and equivariance, and that they
are natural for maps of operads and of cosimplicial algebras.
\end{proof}

Since $\Zz$ is an acyclic operad,
by the lifting property of the cofibrant operad $\Ee$, 
we may choose a quasi-isomorphism of operads $\Ee\lra\Zz$. 
Furthermore, there is a map of operads $\Ee\to \Ee\otimes\Ee$ in such a way that both compositions 
$$\Ee\to \Ee\otimes\Ee\to \Cc om\otimes\Ee=\Ee\text{ and }\Ee\to \Ee\otimes\Ee\to \Ee\otimes\Cc om=\Ee$$
are the identity (see \cite[Lemma 5.7]{ManCochains}. This gives:

\begin{definition}
 The \textit{cosimplicial normalization functor} 
 $$N_\Ee:\Delta\Einf_R\lra \Einf_R$$
 is defined by $N$ composed with the reciprocal image functor of the composition
 $$\Ee\lra \Ee\otimes\Ee\lra \Ee\otimes\Zz.$$
\end{definition}

The functor $N_\Ee$ satisfies the following properties:
\begin{enumerate}
 \item By forgetting the algebra structures we recover the normalized cochains functor $N$.
 \item For a constant cosimplicial $\Ee$-algebra $A^\bullet$ (in which all face and degeneracy maps are identities),
 the isomorphism of cochain complexes $A^0\cong N(A^\bullet)$ is a morphism of $\Ee$-algebras.
\end{enumerate}

We refer to \cite{ManCochains} for proofs of the above facts.

\subsection{Derived direct image and global sections functors}\label{Derivedfunctors}

Denote by $\Sh_X(\ch{R})$ the category of sheaves of cochain complexes of $R$-modules on a topological space $X$.
It is a symmetric monoidal category, with the product of two sheaves $\Aa$ and  $\Bb$
given by the sheafification of the presheaf $U\mapsto \Aa(U)\otimes \Bb(U)$. The unit is the constant sheaf $\underline{R}_X$.

\begin{definition}The \textit{cosimplicial Godement
resolution} of a sheaf $\Aa\in \Sh_X(\chb{\geq 0}{R})$ is an augmented cosimplicial sheaf $G^\bullet(\Aa)\in \Delta \Sh_X(\chb{\geq 0}{R})$
satisfying:
\begin{enumerate}[(i)]
 \item $G^{-1}(\Aa)=\Aa$,
 \item the map $\Aa\to \mathbf{s}(G^+(\Aa))$ is a quasi-isomorphism, and
 \item $\mathbf{s}G^+(\Aa)$ is a complex of flasque sheaves on $X$,
\end{enumerate}
where $\mathbf{s}:\Delta \ch{R}\lra \ch{R}$
denotes the total simple functor, defined by the total complex of the double complex
obtained without normalizing.
\end{definition}
The functor $\mathbf{s}$ is naturally homotopy equivalent
to the cosimplicial normalization functor $N$ (see for instance \cite[Theorem 6.1]{MacLane}).
We refer to \cite[Appendix]{Godement} for the construction and properties of $G^\bullet$.
The assignment $\Aa\mapsto G^\bullet(\Aa)$ defines a lax symmetric monoidal functor (see for instance \cite[Proposition 3.12]{RR}). In particular,
it induces a cosimplicial Godement resolution functor 
\[G^\bullet:\Sh_X(\Einf_R)\lra \Delta \Sh_X(\Einf_R)\]
on sheaves of $\Ee$-algebras.

Let $f:X\to Y$ be a continuous map of topological spaces.
\begin{definition}
The \textit{direct image functor} \[f_*:\Sh_X(\ch{R})\lra \Sh_Y(\ch{R})\]
is defined by 
\[f_*\Aa(U):=\Aa(f^{-1}(U)).\] 
The \textit{inverse image functor} 
\[f^*:\Sh_Y(\ch{R})\lra \Sh_X(\ch{R})\]
is defined by taking $f^*\Bb$ to be the sheafification of the presehaf 
\[U\mapsto \varinjlim_{V\supseteq f(U)}\Bb(V),\]
for any sheaf $\Bb$ on $Y$.

\end{definition}
The direct image functor $f_*$ is left exact, but not right exact in general.
The inverse image functor $f^*$ is exact and left adjoint to $f_*$.
In particular, there are natural adjunction morphisms
$\Bb\to f_*f^*\Bb$ and $f^*f_*\Aa\to \Aa$.

\begin{lemma}\label{fstarlsm}
The functor $f_*$ is lax symmetric monoidal.
\end{lemma}
\begin{proof}Define a map
$\varepsilon:\underline{R}_Y\to f_*\underline{R}_X$
via the adjunction morphism
$$\underline{R}_Y\to f_*f^*\underline{R}_Y=f_*\underline{R}_X.$$ 
This makes $f_*$ compatible with the unit.
We next define a natural transformation 
$$\mu_{\Aa,\Bb}:f_*\Aa\otimes f_*\Bb\lra f_*(\Aa\otimes\Bb).$$
Denote by
$\Gg$ the presheaf defined by $\Gg(U):=\Aa(U)\otimes \Bb(U)$.
Consider the diagram of presheaves
$$
\xymatrix{
\ar[d]_-{f_*\theta}f_*\Gg\ar[r]^{\theta'}&a(f_*\Gg)\ar@{.>}[dl]^\mu\\
f_*aG
}
$$
where $\theta:\Gg\to a\Gg$ and $\theta':f_*\Gg\to a(f_*\Gg)$ denote the sheafification maps.
The universal property of $\theta'$ ensures
that there is a unique dotted map
$\mu$ making the diagram commute.
Then:
$$f_*\Aa\otimes f_*\Bb=a(f_*\Gg)\stackrel{\mu}{\lra} f_*(a\Gg)= f_*(\Aa\otimes\Bb).\qedhere$$
\end{proof}

\begin{remark}\label{rmkadjmor}
The above lemma
gives a direct image functor
$$f_*:\Sh_X(\mathbf{E}_R)\lra \Sh_Y(\mathbf{E}_R)$$
for sheaves of $\Ee$-algebras.
Note as well that $f^*$ is strong monoidal, since $\underline{R}_X\cong f^*\underline{R}_Y$
and there are natural isomorphisms 
\[f^*(\Aa\otimes\Bb)\cong f^ *\Aa\otimes f^*\Bb.\]
Therefore $f^*$ induces an inverse image functor 
\[f^*:\Sh_Y(\mathbf{E}_R)\lra \Sh_X(\mathbf{E}_R)\]
for sheaves of $\Ee$-algebras. Furthermore, the natural adjunction
morphisms 
\[\Bb\to f_*f^*\Bb\text{ and }f^*f_*\Aa\to \Aa.\]
are morphisms in $\Sh_Y(\mathbf{E}_R)$ and $\Sh_X(\mathbf{E}_R)$ respectively.
\end{remark}

\begin{definition}
Define the \textit{derived direct image functor}
$$\RR_\Ee f_*:\Sh_X(\Einf_R)\lra \Sh_Y(\Einf_R)$$
via the composition of functors
$$\Sh_X(\Einf_R)\stackrel{G^+}{\lra}\Delta \Sh_X(\Einf_R)\xra{f_*}\Delta \Sh_Y(\Einf_R)\stackrel{N_\Ee}{\lra}\Sh_Y(\Einf_R)$$
where $N_\Ee$ is defined on a cosimplicial sheaf $\Aa^\bullet$ via Mandell's cosimplicial normalization functor:
\[N_\Ee(\Aa^\bullet)(U):=N_\Ee(\Aa^\bullet(U)).\]
\end{definition}

Note that, at the level of the underlying cochain complexes, 
the functor $\RR_\Ee f_*$ is naturally quasi-isomorphic to the additive
derived direct image functor
$$\RR f_*=\mathbf{s}\circ  f_*\circ G^+\simeq N\circ  f_*\circ G^+:\Sh_X(\ch{R})\lra \Sh_Y(\ch{R}).$$

For every continuous map $f:X\to Y$ of topological spaces, the functor $\RR_\Ee f_*$ preserves quasi-isomorphisms. 
Hence it induces a functor
$$\RR_\Ee f_*:\Ho(\Sh_X(\Einf_R))\lra \Ho(\Sh_Y(\Einf_R))$$
at the level of homotopy categories, which might also be called derived categories.
The following is straightforward:
\begin{lemma}\label{co_unit}
There is a unit map \[1\lra \RR_\Ee f_* \circ f^* \text{ in }\Sh_Y(\Einf_R)\] and a counit map 
\[f^*\circ \RR_\Ee f_*\stackrel{\sim}{\longleftarrow}f^*\circ f_*\lra 1\text{ in }\Ho(\Sh_X(\Einf_R)).\]
In particular, $(f^*,\RR_\Ee f_*)$ is an adjoint pair at the level of homotopy categories.
\end{lemma}

\begin{lemma}\label{compoderived}
Let $f:X\to Y$ and $g:Y\to Z$ be continuous maps of 
topological spaces. There is a natural transformation of functors
\[\RR_\Ee (g\circ f)_*\Longrightarrow \RR_\Ee g_*\circ \RR_\Ee f_*\]
which is a quasi-isomorphism.
\end{lemma}
\begin{proof}
Note that $N_\Ee\circ g_*=g_*\circ N_\Ee$. Indeed, for any cosimplicial sheaf $\Aa^\bullet$ and any open subset $U\subseteq X$, we have
$$N_\Ee(g_*\Aa^\bullet)(U)=N_\Ee(g_*\Aa^\bullet(U))=N_\Ee(\Aa^\bullet(g^{-1}(U))=g_*N_\Ee\Aa^\bullet(U).$$
Furthermore, there is a natural transformation $g_*\Rightarrow \RR_\Ee g_*$ which is a quasi-isomorphism. Indeed, for any
sheaf $\Aa$, the natural map $\Aa\to G^+(\Aa)$ gives a quasi-isomorphism 
$$\Aa\cong N_\Ee(\Aa)\longrightarrow N_\Ee\circ G^+(\Aa).$$
All together gives:
$$\RR_\Ee(g\circ f)_*= N_\Ee\circ g_*f_*\circ G^+ \Rightarrow g_*\circ N_\Ee\circ f_*\circ G^+=g_*\circ \RR_\Ee f_*\Rightarrow \RR_\Ee g_*\circ \RR_\Ee f_*.\qedhere$$
\end{proof}

Consider the global sections functor $\Gamma(X,-):=(\rho_X)_*$ where $\rho_X:X\to \ast$ is the map to the point.

\begin{definition}
Define the \textit{derived global sections functor}
$$\RR_\Ee\Gamma(X,-):\Sh_X(\Einf_R)\lra \Einf_R$$
via the composition of functors
$$\Sh_X(\Einf_R)\stackrel{G^+}{\lra}\Delta \Sh_X(\Einf_R)\xra{\Gamma(X,-)}\Delta \Einf_R\stackrel{N_\Ee}{\lra}\Einf_R.$$
\end{definition}
Note that $\RR_\Ee\Gamma(X,-)=\RR_\Ee(\rho_X)_*$.

\begin{lemma}\label{relativesheaf}
 Let $f:X\to Y$ be a continuous map of 
 topological spaces and $\Aa$ a sheaf of $\Ee$-algebras on $X$. 
 Then there is a quasi-isomorphism of $\Ee$-algebras $$\RR_\Ee\Gamma(X,\Aa)\longrightarrow \RR_\Ee\Gamma(Y,\RR_\Ee f_*\Aa).$$
\end{lemma}
\begin{proof}
It follows from Lemma \ref{compoderived}, since
$\RR_\Ee\Gamma(X,-)=\RR_\Ee(\rho_X)_*,$ where $\rho_X:X\to *$ is the morphism to the point.
\end{proof}

\subsection{Comparison with singular cochains}
The singular cohomology of Hausdorff, paracompact and locally contractible topological 
spaces (such as manifolds or analytic spaces) may be computed via the cohomology of the constant sheaf
$$H^i(X,R)\cong H^i(X,\underline{R}_X).$$
We next refine this result to a statement at the level of $\Ee$-algebras.

\begin{lemma}\label{quistocochains}Let $X$ be a Hausdorff, paracompact and locally contractible topological space and let
$\Aa\in \Sh_X(\Einf_R)$. Assume that for each $x\in X$,
the unit map $R\to \Aa_x$ is a quasi-isomorphism of $\Ee$-algebras.
Then:
\begin{enumerate}
 \item \label{cons1}The sheaf $\Aa$ is quasi-isomorphic as a sheaf of $\Ee$-algebras to the sheafification $aC^*(-,R)$ 
 of the presheaf of singular cochains $C^*(-,R)$.
 \item \label{cons2}There is a string of quasi-isomorphisms of $\Ee$-algebras
 from $\RR_\Ee\Gamma(U,\Aa)$ to $C^*(U,R)$, for every open subset $U\subseteq X$.
\end{enumerate}
\end{lemma}
\begin{proof}Define morphisms of presheaves
$$\Aa\to \Aa\otimes C^*(-,R)\text{ and }C^*(-,R)\to \Aa\otimes C^*(-,R)$$
by letting $f\mapsto f\otimes 1$ and $c\mapsto 1\otimes c$ respectively. After sheafification, 
these give morphisms of sheaves of $\Ee$-algebras
$$\Aa\lra a(\Aa\otimes C^*(-,R))\longleftarrow aC^*(-,R)$$
which by assumption, induce quasi-isomorphisms at their stalks. Since $X$ is locally contractible,
it follows that these morphisms are global quasi-isomorphisms and $(\ref{cons1})$ is satisfied.

We now prove $(\ref{cons2})$. Recall that when forgetting the multiplicative structures, 
the functor $\RR_\Ee\Gamma(X,-)$ is naturally quasi-isomorphic to the 
ordinary derived functor of global sections in the additive setting.
In particular, for any sheaf $\Aa$ of $\Ee$-algebras and any open subset $U\subseteq X$, its hypercohomology may be computed 
by 
$$H^n(U,\Aa)=H^n(\RR_\Ee\Gamma(U,\Aa)).$$ 
Therefore we have a string of quasi-isomorphisms of $\Ee$-algebras
$$\RR_\Ee\Gamma(U,\Aa)\lra \RR_\Ee\Gamma(U,a(\Aa\otimes C^*(-,R)))\longleftarrow \RR_\Ee\Gamma(U,aC^*(-,R))\longleftarrow C^*(U,R),$$
where for the last arrow, we used the fact that the 
sheafification map from $C^*(U,R)$ to $aC^*(U,R)$ is a quasi-isomorphism of $\Ee$-algebras.
\end{proof}

\begin{theorem}\label{quistocochains_thm}
The assignment $X\mapsto \RR_\Ee\Gamma(X,\underline{R}_X)$, where  $\underline{R}_X$ denotes the constant sheaf on $X$, 
defines a contravariant functor $\mathrm{Top}\lra \Einf_R$ 
from the category $\mathrm{Top}$ of Hausdorff, paracompact and locally contractible topological spaces
to the category of $\Ee$-algebras,
which is naturally quasi-isomorphic to 
the functor $C^*(-,R)$ defined by the complex of singular cochains with its $\Ee$-algebra structure.
\end{theorem}
\begin{proof}
We first show that this construction is functorial. 
The proof is similar to the additive setting.
Let $f:X\to Y$ be a continuous map of topological spaces.
Then we have a map $f^*G^+\underline{R}_Y\to G^+\underline{R}_X$ (see for example \cite[Proposition 5.2.1]{Huber}).
By composing with the adjunction morphism, we get
$$G^+\underline{R}_Y\to f_*f^*G^+\underline{R}_Y\to f_*G^+\underline{R}_X.$$
Therefore we have
\begin{align*}
\RR_\Ee\Gamma(Y,\underline{R}_Y)=N_\Ee\circ (\rho_Y)_* \circ G^+(\underline{R}_Y)\to 
N_\Ee\circ (\rho_Y)_* f_* \circ G^+(\underline{R}_X)=\\= N_\Ee\circ (\rho_X)_* \circ G^+(\underline{R}_X)=\RR_\Ee\Gamma(X,\underline{R}_X).
\end{align*}
Compatibility with composition of maps follows from the fact that the adjunction morphism is compatible with composition.
This proves that 
\[\RR_\Ee(-,\underline{R}_{-}):\mathrm{Top}\lra \Einf_R\]
is a contravariant functor.
It now suffices to apply Lemma \ref{quistocochains} to the constant sheaf.
\end{proof}

\begin{remark}\label{remTW}
Theorem \ref{quistocochains_thm} implies that the functor
 $X\mapsto \RR_\Ee\Gamma(X,\underline{R}_X)$ 
 is a cochain theory in the sense of Mandell \cite{ManCochains}.
For $R=\QQ$, Sullivan's functor of piece-wise linear forms $A_{PL}(-)$
 defines a cochain theory. In particular, 
the  
$\Ee$-algebra $\RR_\Ee\Gamma(X,\underline{\QQ}_X)$
is naturally quasi-isomorphic 
to $A_{PL}(X)$. Also, in \cite{Na}, Navarro-Aznar defined a commutative dg-algebra 
$\RR_{\TW}\Gamma(X,\underline{\QQ}_X)$ quasi-isomorphic to $A_{PL}(X)$
using the Thom-Whitney simple functor
\[\mathbf{s}_{\TW}:\Delta\mathrm{CDGA}_\QQ\lra \mathrm{CDGA}_\QQ.\]
Over the rationals, the two constructions are naturally quasi-isomorphic.
In \cite{Petersen}, Petersen constructed a global sections functor for sheaves of $E_\infty$-algebras, using 
the theory of twisted algebras, which is also valid for the compact support case.
\end{remark}

\section{Filtered $E_\infty$-algebras}\label{SecFil}
In this section we review the theory of filtered $E_\infty$-algebras. We define filtered normalization functors which depend on the filtration that one considers on the operad $\Ee$ and describe the behaviour of the normalization functor with respect to spectral sequences.
We also study Steenrod and reduced $\ell$-th power operations on spectral sequences.

\subsection{Filtered operads and filtered algebras}
Denote by $\ch{\Ff R}$ the category of filtered cochain complexes of $R$-modules.
Recall that given filtered complexes $(A,W)$ and $(B,W')$, then their tensor product is again a filtered complex, with the filtration
\[(W*W')_p(A\otimes B):=\sum_{i+j=p} \Img(W_iA\otimes W_j'B\lra A\otimes B).\]
This makes $\ch{\Ff R}$ into a symmetric monoidal category, where the unit is given by $R$ with the trivial filtration
$0=t_{-1}R\subset t_0R=R$.
Also, the internal hom of complexes is a filtered cochain complex, with the filtration
$$(W|W')_p\Hom^n(A,B):=\{f:A\to B; f(W_qA^m)\subset W'_{q+p}B^{m+n}\}.$$

Let us recall three functorial increasing filtrations that will play important roles in the sequel:
\begin{definition}
Let $A$ be a cochain complex. The \textit{trivial filtration} $t$ on $A$ is 
$$0=t_{-1}A^n\subset t_0A^n=A^n\text{ for all }n\in\ZZ.$$
The \textit{canonical filtration} $\tau$ on $A$ is given by 
$$
\tau_pA^n=\left\{ 
\begin{array}{ll}
0&,p<n\\
\Ker(d:A^n\to A^{n+1})&,p=n\\
A^n&,p>n
\end{array}
\right..
$$
The \textit{b\^{e}te filtration} $\sigma$ is given by
$\sigma_pA^n=A^n$ if $n\geq -p$ and $\sigma_pA^n=0$ if $n< -p.$
\end{definition}
The three filtrations $t$, $\tau$ and $\sigma$ define lax symmetric monoidal functors from
complexes to filtered complexes.
We will use the following easy lemma:
\begin{lemma}\label{maptosigma}
Let $A$ be a cochain complex concentrated in non-positive degrees. Then the identity defines 
morphisms of filtered complexes 
$$(A,\sigma)\lra (A,t)\lra (A,\tau).$$
\end{lemma}
\begin{proof}
For the first map, it suffices to note that if $n\geq 0$, then $\sigma_pA^{-n}$ is non-trivial only when $n\leq p$, for which $t_pA^{-n}=A^{-n}$.
For the second map, note that $\tau_pA^n=A^n$ for all $(p,n)$ such that $p\geq 0$ and $n\leq 0$. Indeed, if
$p=n=0$ then $\tau_pA^n=\Ker(d:A^0\to A^1)=A^0$, while if $p>n$ then $\tau_pA^n=A^n$.
\end{proof}

This triad of filtrations, $t$, $\sigma$ and $\tau$
already plays a key role in Deligne's additive mixed Hodge theory
and will be essential for the applications to the weight filtration of Section \ref{SecWeight}.
In this section, we will focus on the filtrations $t$ and $\sigma$ defined on the operad $\Ee$. 

Recall that, given a filtered complex $(A,W)$, where $W$ is an increasing filtration, the $r$-th page $E_r$ of its associated spectral sequence is given by:
\[E_r^{-p,q}(A,W):={{\{x\in W_{p}A^{q-p};dx\in W_{p-r}A^{q-p+1}\}}\over{\{x\in W_{p-1}A^{q-p}; dx\in W_{p-r}A^{q-p+1}\}+dW_{p+r-1}\cap W_pA^{q-p}}}.\]
The differential on $E_r$, induced by the differential on $A$, has bidegree $(r,-r+1)$,
\[d_r:E_r^{-p,q}(A,W)\lra E_r^{-p+r,q-r+1}(A,W),\]
making $(E_r^{*,*}(A,W),d_r)$ into an $r$-bigraded cochain complex.

\begin{definition}Let $r\geq 0$. 
A morphism $f:(A,W')\to (B,W')$ of filtered complexes is called \textit{$E_r$-quasi-isomorphism} if it
induces a quasi-isomorphism of bigraded complexes at the $E_r$-stage of the associated spectral sequences, so that the map
$$E_{r+1}(f):E_{r+1}(A,W')\to E_{r+1}(B,W')$$ is an isomorphism of bigraded $R$-modules. 
\end{definition}

We will denote by $\Qq_r$ the class of $E_r$-quasi-isomorphisms. 
We have a chain of inclusions 
\[\cdots\subseteq\Qq_{r+1}\subseteq \Qq_r\subseteq\cdots\subseteq \Qq.\]

\begin{definition}
A \textit{filtered operad} $(\Oo,W)$ is an operad in $\ch{\Ff R}$. 
Alternatively, it is given by an operad $\Oo$ in $\ch{R}$ together with a filtration
$W$ on each cochain complex $\Oo(n)$ in such a way that its structure morphisms 
$$\Oo(\ell)\otimes \Oo(m_1) \otimes \dots \otimes \Oo(m_\ell)
\longrightarrow \Oo(m_1 + \cdots + m_\ell)$$
are compatible with filtrations and
its unit satisfies $\eta : R \longrightarrow W_0\Oo(1)$.
\end{definition}

Note that if $(\Oo,W)$ is a filtered operad, then 
for all $r\geq 0$, the collection
$$E_r^{*,*}(\Oo,W):=\{E_r^{*,*}(\Oo(\ell),W)\}$$
defines an operad in $r$-bigraded cochain 
complexes (see also \cite[Proposition 5.1]{KSV}).

\begin{definition}
Let $(\Oo,W)$ be a filtered operad. A \textit{filtered $(\Oo,W)$-algebra} is a filtered cochain complex $(A,W')$
together with an action of $\Oo$ on $A$ whose structure morphisms
$$\theta_A(\ell): \Oo(\ell)\otimes_{\Sigma_\ell} A^{\otimes \ell} \longrightarrow A$$
are compatible with filtrations.
\end{definition}

We refer to \cite{Brunoetal} for an equivalent treatment of filtered algebras via the endomorphism operad.
Note that if $(A,W')$ is a filtered $(\Oo,W)$-algebra, then for all $r\geq 0$,
we have that $E_r(A,W')$ is an $E_r(\Oo,W)$-algebra.

\subsection{Filtered $\Ee$-algebras}
For the study of filtered $\Ee$-algebras, an obvious choice for a filtration on the operad $\Ee$ is the trivial filtration.
This choice will be adequate if one is interested in the class $\Qq_0$ of $E_0$-quasi-isomorphisms.
Indeed, for the trivial filtration $t$, we have 
\[E_0^{-p,q}(\Ee,t):=Gr^t_{p}\Ee^{q-p}\cong 
\left\{ 
\begin{array}{ll}
\Ee^{q}, &\text{ if }p=0\\
0, &\text{ otherwise }
\end{array}\right.\]
and so $E_0^{*,*}(\Ee,t)$ is a cofibrant $E_\infty$-operad in the category of $0$-bigraded complexes.
In addition, we have $E_1^{0,0}(\Ee,t)\cong \Cc om$ and $E_1^{*,*}(\Ee,t)$ is trivial in the remaining bidegrees.
As a consequence, if $(A,W)$ is a filtered $(\Ee,t)$-algebra then $E_0^{*,*}(A,W)$ is an $\Ee$-algebra (in $0$-bigraded complexes)
and $E_1^{*,*}(A,W)$ is a commutative algebra (in $1$-bigraded complexes).

If one is interested in the class $\Qq_r$ of $E_r$-quasi-isomorphisms, with $r>0$, one is led to shift the trivial filtration in order to obtain isomorphisms
\[E_{r+1}^{0,0}(\Ee(\ell))\cong \Cc om(\ell)\text{ and }E_{r+1}^{*,*}(\Ee(\ell))=0\text{ for }(p,q)\neq(0,0).\]
In the case $r=1$, this is achieved by the b\^{e}te filtration. Indeed, we have
\[E_1^{-p,q}(\Ee,\sigma):=H^{q-p}(Gr^\sigma_{p}\Ee)\cong 
\left\{ 
\begin{array}{ll}
\Ee^{-p},& \text{ if }q=0\\
0,& \text{ otherwise }
\end{array}\right..\] 
Therefore $E_1^{*,*}(\Ee,\sigma)$ is a cofibrant $E_\infty$-operad in the category of $1$-bigraded complexes.
Also, $E_2^{0,0}(\Ee,\sigma)\cong  \Cc om$ and $E_2^{*,*}(\Ee,\sigma)$ is trivial otherwise.
As a consequence, if $(A,W)$ is a filtered $(\Ee,\sigma)$-algebra then $E_1^{*,*}(A,W)$ is an $\Ee$-algebra
and $E_2^{*,*}(A,W)$ is a commutative algebra.

\begin{remark}
By Lemma \ref{maptosigma} we have a morphism of filtered operads
\[(\Ee,\sigma)\lra (\Ee,t)\]
and so if $(A,W)$ is an $(\Ee,t)$-algebra then it may also be viewed as an $(\Ee,\sigma)$-algebra.
\end{remark}

We will denote by $\Ff\Einf^t_R$ the category of finitely filtered non-negatively graded $(\Ee,t)$-algebras
and by \[\Ho(\Ff\Einf^t_R):=\Ff\Einf^t_R[\Qq_0^{-1}]\]
        the localized category with respect to $E_0$-quasi-isomorphisms.
        Likewise, let $\Ff\Einf^\sigma_R$ denote the category of $(\Ee,\sigma)$-algebras
        and 
\[\Ho(\Ff\Einf^\sigma_R):=\Ff\Einf^t_R[\Qq_1^{-1}]\]
  the localized category with respect to $E_1$-quasi-isomorphisms.

  \begin{remark}
   Given any filtration $W$ on a cochain complex $A$, one may consider the shifted filtration 
$SW_pA^n:=W_{p+n}A^n$
which satisfies 
\[E_{r+1}^{p,q}(A,SW)\cong E_1^{-q,p+2q}(A,W)\quad\text{for all}\quad r\geq 0.\]
In particular, $\sigma=St$ is just a shift of the trivial filtration. For all $r\geq 0$ we may consider the $r$-shift 
$S^rt$
which serves as a natural  filtration on $\Ee$ for studying the category of 
filtered $\Ee$-algebras with respect to $E_r$-quasi-isomorphisms. We have chosen to restrict to the cases $r=0$ and $r=1$ only, 
which are the cases arising in the study of the weight filtration.
  \end{remark}

\subsection{Normalization functors for filtered algebras}\label{filnorm}

We will consider an increasing filtration $F=\{F_p\}$ on $C_*(\Delta^\bullet)$, 
compatible with the differential and the cosimplicial structure.
Then, given a cosimplicial filtered complex $A^\bullet$, the cochain complex 
$N(A^\bullet)$ inherits a natural filtration from that in $\Hom(C_*(\Delta^\bullet),A^\bullet)$. 
This gives a filtered normalization functor $N^F$ for filtered complexes.

\begin{lemma}\label{filteredZOalg}Let $(\Oo,W)$ be a filtered operad and
$(A^{\bullet},W')$ a cosimplicial filtered $(\Oo,W)$-algebra. Let $F$ be a filtration on 
$C_*(\Delta^\bullet)$.
Then 
$$N^F(A^{\bullet},W'):=\int_\alpha \Hom_{\ch{\Ff R}}((C_*(\Delta^\alpha),F), (A^{\bullet},W))
$$
is a filtered $(\Oo\otimes \Zz,W*F)$-algebra and the filtration on $N(A^{\bullet})$ is induced by 
the filtration $(F|W')$ on 
$\Hom(C_*(\Delta^\alpha),A^{\alpha})$.
\end{lemma}
\begin{proof}
It suffices to show that the action maps given in the proof of 
Theorem \ref{algstrN} are compatible with filtrations:
$$W_p\Oo(n)\otimes F_q\Zz(n)\otimes W_{p_1}N(A^\bullet)\otimes\cdots \otimes W_{p_n}N(A^\bullet)\lra W_{p+q+p_1+\cdots+p_n}N(A^\bullet).$$
We first show that
$$\alpha_n(F_q\Zz(n)\otimes W_{p_1}N(A^\bullet)\otimes\cdots \otimes W_{p_n}N(A^\bullet))\subset W_{p_1+\cdots + p_n}N(\mathrm{diag}(A^\bullet\otimes\cdots \otimes A^\bullet)).$$
This follows from the two easy facts that: first,
given $a_i\in W_{p_i}\Hom(A_i,B_i)$, then 
$$a_1\otimes \cdots \otimes a_n\in W_{p_1+\cdots + p_n}\Hom(A_1\otimes \cdots \otimes A_n,B_1\otimes \cdots \otimes B_n).$$
Second, given morphisms $f\in W_p\Hom(A,B)$ and $g\in W_q\Hom(B,C)$, then their composition satisfies 
$g\circ f\in W_{p+q}\Hom(A,C)$.
Next, note that the morphism 
$$\zeta:A\otimes\Hom(B,C)\to \Hom(B,A\otimes C)\text{ ; }\zeta(x\otimes f)(y)=x\otimes f(y)$$ is compatible with filtrations. Indeed, assume that $x\in W_iA$ and $f\in W_j\Hom(B,C)$.
Then, for $y\in W_qB$ we have $x\otimes f(y)\in W_{i+j+q}(A\otimes C)$. This proves that 
$$\zeta(x\otimes f)\in W_{i+j}\Hom(B,A\otimes C).$$
Lastly, since $\theta$ is compatible with filtrations and $N$ is functorial in the category of filtered complexes,
it follows that $N(\theta)$ is compatible with filtrations.
\end{proof}

Let $F$ be one of the filtrations $t$ (trivial) or $\sigma$ (b\^{e}te).
If $(A^\bullet,W)$ is a filtered $(\Ee,F)$-algebra, the above result makes $N^F(A^\bullet,W)$ into an 
$(\Ee\otimes\Zz,F*F)$-algebra. 
By composing $N^F$ with the reciprocal image functor of the filtered diagonal map
\[(\Ee,F)\to (\Ee\otimes \Ee,F*F)\to (\Ee\otimes\Zz,F*F)\]
we obtain a cosimplicial normalization functor $N_\Ee^F$ for filtered $(\Ee,F)$-algebras.

To study the weight spectral sequence it will be useful to understand the behavior of the b\^{e}te filtration
of the cosimplicial normalization functor.
In the additive setting this corresponds to the \textit{diagonal filtration} introduced in 7.1.6 of \cite{DeHIII} for filtered cochain complexes
(see also Section $\S$2 of \cite{CG3}). 

\begin{lemma}\label{E1lemma}
Let $(A^{\bullet},W)$ be a cosimplicial filtered $(\Ee,\sigma)$-algebra. The spectral sequence associated
with $N^\sigma_\Ee(A^\bullet,W)$ satisfies
\[E_1^{-p,q}(N_\Ee^\sigma(A^\bullet,W))
\cong E_1(\varphi)^*\int_\alpha \sum_m \Hom\left(C_m(\Delta^\alpha), E_1^{m-p,q}(A^\alpha,W) \right).
\]
where $\varphi:(\Ee,\sigma)\lra (\Ee\otimes\Zz,\sigma*\sigma)$.
\end{lemma}
\begin{proof}
Since the b\^{e}te filtration $\sigma$ satisfies $Gr_m^\sigma C_n(\Delta^\alpha)=0$ for all $n\neq m$, we obtain
$$Gr_{p}^{\sigma|W}\Hom^{q-p}(C_*(\Delta^\alpha),A^\alpha)\cong \sum_m \Hom(C_m(\Delta^\alpha),Gr_{p-m}^W(A^\alpha)^{m-p+q}).$$
Computing the cohomology of this graded complex, we get 
$$E_1^{-p,q}(\Hom^*(C_*(\Delta^\alpha), A^\alpha),\sigma|W)
 \cong \sum_m \Hom(C_m(\Delta^\alpha), E_1^{m-p,q}(A^\alpha,W)).
 $$
 Noting that $E_1$ commutes with $\varphi^*$, we get the desired isomorphism.
\end{proof}

\begin{remark}
 The above result just states that $N_\Ee^\sigma$ commutes with the functor $E_1$, when considering the obvious normalization functor for cosimplicial $\Ee$-algebras in 1-bigraded complexes. Similarly, for the trivial filtration, one analogously shows that $N_\Ee^t$ commutes with $E_0$.
\end{remark}

\subsection{Steenrod operations and spectral sequences}\label{SectionSteenrod}

Steenrod operations in spectral sequences have been studied in \cite{MaySteenrod}, \cite{SingerSS}, \cite{Dwyer} and \cite{TurnerSS} among others. In particular, Singer \cite{SingerSS} defined Steenrod operations on certain first quadrant spectral sequences that include the Leray-Serre spectral sequence
as an example and Dwyer \cite{Dwyer} dealt with the second quadrant case. The approach in both cases is to construct the operations directly at the level of the filtered complexes and then keep track of
how these operations behave in the spectral sequence.
In this section, we briefly review how Steenrod operations and reduced $\ell$-th power operations are defined on an $E_\infty$-algebra and on the spectral sequence of a filtered $(\Ee,\sigma)$-algebra.

Choosing $\Ee$ to be the Barratt-Eccles operad, we identify $\Ee(\ell)$ with a free resolution of $\FF_\ell$ as a $\Sigma_\ell$-module. Then, we may choose a family of elements $e_i\in \Ee(\ell)^{-i}$ such that if $A$ is an $\Ee$-algebra over $\FF_\ell$ with structure morphisms $\theta$, these give the operations 
\[D_i:A^{k_1}\otimes\cdots\otimes A^{k_\ell}\longrightarrow A^{k_1+\cdots+k_\ell-i}\] given by \[
D_i(a_1,\cdots,a_\ell):=\theta(e_i\otimes a_1\otimes\cdots\otimes a_\ell).\]
We distinguish two cases:
\begin{enumerate}
 \item 
If $\ell=2$, the operation $D_i(a,b)=a\cup_i b$ is just the cup$_i$-product of $a$ and $b$ and the
 \textit{Steenrod operations at the cochain level}
\[P^s:A^k\lra A^{k+s}\,;\, s\leq k\]
are defined by 
$$P^s(a):=da\cup_{k-s+1} a+ a\cup_{k-s}a=\theta(e_{k-s+1}\otimes da\otimes a+e_{k-s}\otimes a\otimes a).$$
Since $dP^s=P^sd$, these induce \textit{Steenrod operations in cohomology}
$$Sq^s:H^k(A)\to H^{k+s}(A)\,;\, Sq^s([a]):=[a\cup_{k-s}a]\text{ for all }s\leq k.$$
\item 
For $p>2$ we have \textit{reduced $\ell$-th power operations} 
\[P^s:A^k\to A^{k+2s(\ell-1)}\quad\text{ and }\quad \beta P^s:A^k\to A^{k+2s(\ell-1)+1}\]
defined via the operations $D_i$ applied to $\ell$ elements. These
induce operations in cohomology 
\[P^s:H^k(A)\to H^{k+2s(\ell-1)}(A)\quad\text{ and }\quad \beta P^s:H^k(A)\to H^{k+2s(\ell-1)+1}(A).\]
We refer to \cite{MaySteenrod} for the construction and main properties of these operations.
\end{enumerate}

In the filtered case, we have:
\begin{lemma}\label{myoperations}
 Let $(A,W)$ be a filtered $(\Ee,\sigma)$-algebra over $\FF_\ell$. Then we have operations
 \[P^s:E_r^{p,q}(A,W)\lra E_r^{p-q+s,2q}(A,W)\quad\text{for }\ell=2,\]
and 
\[P^s:E_r^{p,q}(A,W)\lra E_r^{p+(2s-q)(\ell-1),\ell q}(A,W)\quad\text{for }\ell>2.\]
when $\ell>2$, for $r=1,2$.
\end{lemma}
\begin{proof}
 Since $(A,W)$ is a filtered $(\Ee,\sigma)$-algebra, it follows that $E_1(A,W)$ is an $\Ee$-algebra in $1$-bigraded complexes and the above constructions give operations in $E_1$. Since these operations are compatible with $d_1$, they induce operations in $E_2$ which are well-defined.
\end{proof}

\begin{remark}\label{verticals}
Using Lemma \ref{E1lemma},
by results of May \cite{MaySteenrod}, Singer \cite{SingerSS} and Dwyer \cite{Dwyer} (see also Theorem 6.15 of \cite{McC}),
one may also define \textit{vertical operations}
\[P^s_v:E_1^{p,q}(A,W)\lra E_1^{p,q+s}(A,W)\quad\text{for }\ell=2,\]
and
\[P^s_v:E_1^{p,q}(A,W)\lra E_1^{p,q+2s(\ell-1)}(A,W)\quad\text{for }\ell>2.\]
These are also
compatible with $d_1$. We will see in Section \ref{SecWeight} a concrete and accessible description of these vertical operations, for the weight spectral sequence of algebraic varieties. 
Moreover, the operations on $E_1$ of Lemma \ref{myoperations}, are shown to induce operations at each stage $E_r$ with $r>1$.
\end{remark}

\section{Weight filtration}\label{SecWeight}
The weight filtration on the cohomology $H^*(X,\QQ)$ with rational coefficients of any complex algebraic variety $X$ was first introduced by Deligne \cite{DeHII, DeHIII}. The weight filtration was later extended to a filtration on $H^*(X,R)$ where $R$ is an arbitrary commutative ring by Gillet and Soulé (\cite{GilletSoule}, see also \cite{GN}). 
We will refer to this filtration as the \textit{classical weight filtration}. In this section, we define a weight filtration on the complex of singular cochains of any complex algeraic variety, compatible with the $E_\infty$-algebra structure, and which recovers the classical weight filtration on cohomology. We will use the extension criterion of functors of \cite{GN}, which is rests on cohomological descent structures.

\subsection{Cohomological descent structures on $\Ee$-algebras}
A \textit{cohomological descent structure} on a category $\Dd$ is essentially given by a saturated class of \textit{weak equivalences} and a \textit{simple functor} sending every cubical codiagram of $\Dd$ (as a cubical replacement for cosimplicial objects in $\Dd$) to an object of $\Dd$ and
satisfying certain axioms analogous to those of the total complex of a double complex.

We first recall the main ideas on cubical codiagrams and
cohomological descent categories.
We refer to \cite{GN} for the precise definitions.

\begin{definition}
Given a non-empty finite set $S$, denote by $\square_S$ the set of non-empty parts of $S$, ordered by the inclusion. 
We will denote by the same symbol its associated category.
We define a category $\Pi$ as follows. Its objects are given by 
families $S=(S_i)_{i\in I}$ of non-empty finite sets $S_i$, indexed by a variable finite set $I$. 
Given such a family, we will consider the cartesian product 
$\square_S:=\prod_{i\in I} \square_{S_i}$.
Given objects $S=(S_i)_{i\in I}$ and $T=(T_j)_{j\in J}$ of $\Pi$, a morphism $S\to T$ is given by an injective map
$\prod_i S_i\to \prod_j T_j$ such that for any $\alpha=(\alpha_i)\in \square_S$, there is $\beta=(\beta_j)\in \square_T$ with $u(\prod_i \alpha_i)=\prod_j \beta_j$.

The assignement $S\mapsto \square_S$ defines a faithful functor from $\Pi$ to the category of categories.
We will identify $\Pi$ with its image by this functor and so we will denote by $\square_S$ the object $S$ of $\Pi$ and by $u$ the map $\square_u$.
\end{definition}

\begin{definition}
Let $\Dd$ be an arbitrary category. A \textit{cubical codiagram} of $\Dd$ is
a pair $(X,\square)$, where $\square$ is an object of $\Pi$ and $X$ is a functor $X:\square\to \Dd$.
Then $X$ is said to be a \textit{$\square$-codiagram of $\Dd$}.
A \textit{morphism} $(X,\square)\to (Y,\square')$ between cubical codiagrams is given by a pair $(a,\delta)$ where
$\delta:\square'\to \square$ is a morphism of $\Pi$ and $a:\delta^*X:=X\circ\delta \to Y$ is a natural transformation.
\end{definition}
For instance, if $S=\{0\}$ then $\square_S$-codiagrams are just objects $X_1$ of $\Dd$. If $S=\{0,1\}$ then $\square_S$-codiagrams
are diagrams in $\Dd$ of the form 
$X_{1,0}\longleftarrow X_{1,1}\longrightarrow X_{0,1}.$
Denote by $Codiag_{\Pi}\Dd$ the category of cubical codiagrams of $\Dd$.

\begin{definition}
A \textit{cohomological descent category} is given by 
a cartesian category $\Dd$ provided with an initial object $1$, together with
a saturated class of morphisms $\Qq$ of $\Dd$ which is stable by products,
called \textit{weak equivalences}, and a covariant functor 
\[\mathbf{s}:Codiag_{\,\Pi}\Dd\lra \Dd,\] called the \textit{simple functor}.
The data $(\Dd,\Qq,\mathbf{s})$ must satisfy the axioms of Definition 1.5.3 of \cite{GN}.
Objects weakly equivalent to the initial object $1$ are called \textit{acyclic}.
\end{definition}

\begin{example}
The category $\ch{R}$ of cochain complexes of $R$-modules makes the primary example of a cohomological descent category,
with the weak equivalences given by quasi-isomorphisms and the simple
functor defined via the total complex. More generally, 
let $\Dd$ be a category with initial object and a simple functor $\mathbf{s}$.
In a large class of examples, a cohomological descent structure on $\Dd$ is given after lifting the class of
quasi-isomorphisms on $\ch{R}$ via a functor $\Psi:\Dd\lra \ch{R}$, provided that $\Psi$ is compatible
with the simple functor (see Proposition 1.5.12 of \cite{GN}).
For instance, the category $\mathrm{CDGA}_\kk$ of commutative dg-algebras over a field
$\kk$ of characteristic 0,
with the Thom-Whitney simple $\mathbf{s}_{\mathrm{TW}}$ of \cite{Na}, inherits a cohomological descent structure via
the forgetful functor  $\mathrm{CDGA}_\kk\lra \ch{\kk}$.
\end{example}

We next show that, in its cubical version, the normalization functor defines
a cohomological descent structure on the category of $\Ee$-algebras,
with quasi-isomorphisms as weak equivalences.

\begin{definition}\label{normal_square}
Let $A^{\bullet}$ be a $\square$-codiagram of cochain complexes.
Define the \textit{normalization} $N(A^{\bullet})$ of $A^{\bullet}$
as the end
$$N(A^{\bullet}):=\int_\alpha \Hom(C_*(\Delta^\alpha), A^{\alpha})$$
of the functor $\square^{op}_\alpha\times \square_\beta\lra \ch{R}$
given by $(\alpha,\beta)\mapsto \Hom(C_*(\Delta^\alpha), A^{\beta})$.
\end{definition}

The same proof of Theorem \ref{algstrN} for cosimplicial $\Oo$-algebras gives:

\begin{lemma}\label{cubicalnorm}
Let $\Oo$ be an operad.
If  $A^{\bullet}$ is a $\square$-codiagram of $\Oo$-algebras, then 
$N(A^{\bullet})$ is a $(\Oo\otimes\Zz)$-algebra, naturally in $\Oo$ and $A^\bullet$.
\end{lemma}

As in the cosimplicial setting, the above lemma gives a 
normalization functor $$N_\Ee:Codiag_\Pi\Einf_R\lra \Einf_R$$ 
by composing $N$ with the reciprocal image functor of the composition
\[\Ee\lra \Ee\otimes\Ee\lra \Ee\otimes\Zz.\]
\begin{theorem}\label{descent1}
The triple $(\Einf_R,N_\Ee,\Qq_\Ee)$ is a cohomological descent category.
\end{theorem}
\begin{proof}
Denote by $U:\Einf_R\to \ch{R}$ the forgetful functor. By definition, 
the class $\Qq$ is given by the inverse image of quasi-isomorphisms of cochain complexes by $U$.
Also, since restriction morphisms are the identity on the underlying cochain complexes, we have 
$U\circ N_\Ee=N\circ U.$
This proves that we are in the hypotheses of \cite[Proposition 1.5.12]{GN}
on the transfer of cohomological descent structures. Therefore it suffices to prove that 
the triple $(\ch{R},N,\Qq)$ is a cohomological descent category.
This follows from \cite[1.7.2]{GN}, by noting that the simple functor
considered in loc. cit, is isomorphic to $N$. We prove this last claim.

Denote by $C_*(\square_\alpha)$ the chain complex of oriented chains of
the simplicial complex of the non-empty parts of $\square_\alpha$.
Let $C^*(\square_\alpha)$ denote its dual cochain complex, given in each degree by 
$$C^n(\square_\alpha)=\Hom_{R}(C_*(\square_\alpha),R).$$
Given a $\square$-codiagram of cochain complexes $A^{\bullet}$, then its simple is defined by the end 
of the functor $$\square^{op}_\alpha\times \square_\beta\lra \ch{R}
\text{ given by }
(\alpha,\beta)\mapsto C^*(\square_\alpha)\otimes A^{\beta}$$ (see 1.3.1 of \cite{GN}).
Now, we have an isomorphism of cochain complexes
$$C^*(\square_S)\otimes A\cong \Hom(C_*(\square_S),A).$$
Therefore it suffices to prove that
$C_*(\square_\alpha)\cong C_*(\Delta^\alpha)$. This follows from the definition of the oriented chains, since
 $C_p(\square_n)$ is the free $R$-module generated by the non-degenerate $p$-simplices of $\Delta^n$.
\end{proof}

To define a weight filtration on singular cochains we will use a filtered version of the above result.

\begin{theorem}\label{cohdesccats}
Let $F$ be one of the filtrations $t$ (trivial) or $\sigma$ (b\^{e}te).
We have normalization functors $N^F_\Ee$
for filtered $(\Ee,F)$-algebras and
the triples $(\Ff\Einf^t_R,N_\Ee^t,\Qq_0)$ and $(\Ff\Einf^\sigma_R,N_\Ee^\sigma,\Qq_1)$ are cohomological descent categories.
\end{theorem}
\begin{proof}By Lemma \ref{filnorm} together with the results of Section \ref{filnorm} we have filtered normalization functors also in the cubical setting.
The verification  of the cohomological descent structures is parallel to that of Theorem \ref{descent1}, via
the forgetful functor from filtered $ºEe$-algebras to the category of filtered complexes $\ch{\Ff R}$ and
using the fact that both triples $(\ch{\Ff R},N^t,\Qq_0)$ and $(\ch{\Ff R},N^\sigma,\Qq_1)$
are cohomological descent categories (see \cite[Theorem 2.8]{CG3}).
\end{proof}

\subsection{The extension criterion of functors}
The basic idea of the extension criterion of functors is that, given a functor compatible with smooth
blow-ups from the category of smooth schemes to a cohomological descent category, there
exists an extension to all schemes. Such an extension is essentially unique
and is compatible with general blow-ups. Let us review the main statements.

Denote
by $\Sch{\kk}$ the category of reduced schemes, that are separated and of finite type over a field $\kk$
of characteristic 0. Denote by $\Sm{\kk}$ the full subcategory of smooth schemes.

\begin{definition}\label{defelement1_alg}
A cartesian diagram of $\Sch{\kk}$
$$
\xymatrix{
\ar[d]_g\wt Y\ar[r]^j&\wt X\ar[d]^f\\
Y\ar[r]^i&X
}
$$
is said to be an \textit{acyclic square} if $i$ is a closed immersion, $f$ is proper and it induces an isomorphism
$\wt X-\wt Y\to X-Y$.
It is an \textit{elementary acyclic square} if, in addition, 
all the objects in the diagram are in $\Sm{\kk}$, and $f$ is the blow-up of $X$ along $Y$.
\end{definition}

Denote by $\Vv_{\kk}^2$ the category of \textit{good compactifications}: its objects are pairs $(X,U)$ where 
$X$ is a smooth projective scheme and $U$ 
is a smooth open subvariety of $X$ such that  
$D=X-U$ is a normal crossings divisor.
Morphisms are pairs $(f,f'):(X,U)\to (X',U')$ such that the following diagram commutes
$$
\xymatrix{
U\ar[r]\ar[d]_{f'}&X\ar[d]^f\\
U'\ar[r]&X'
}.
$$

\begin{definition}\label{defelement2}
A commutative diagram of $\Vv_{\kk}^2$
$$
\xymatrix{
\ar[d]_g(\wt Y,\wt U\cap \wt Y)\ar[r]^j&(\wt X,\wt U)\ar[d]^{f}\\
(Y,U\cap Y)\ar[r]^i&(X,U)
}
$$
is said to be an \textit{acyclic square} if $f:\wt X\to X$ is proper, $i:Y\to X$ is a closed immersion,
the diagram of the first components is cartesian,
$f^{-1}(U)=\wt U$ and the diagram of the second components is an acyclic square of $\Sch{\kk}$.
\end{definition}

\begin{definition}
A morphism $f:(\wt X,\wt U)\to (X,U)$ in $\Vv_{\kk}^2$ is called \textit{proper elementary modification} if
$f:\wt X\to X$ is the blow-up of $X$ along a smooth center $Y$ which has normal crossings with the
 complement of $U$ in $X$, and if $\wt U=f^{-1}(U)$.
\end{definition}

\begin{definition}
An acyclic square of objects of $\Vv_{\kk}^2$ is said to be an \textit{elementary acyclic square} 
if the map $f:(\wt X,\wt U)\to (X,U)$ is a proper elementary modification,
and the diagram of the second components is an elementary acyclic square of $\Sm{\kk}$.
\end{definition}

\begin{definition}[$\Phi$-rectified functors]
Let $\Dd$ be a cohomological descent category and let 
$\square\in \Pi$. Denote by $\Dd^\square:=\mathrm{Fun}(\square,\Dd)$ the category of diagrams of type $\square$ in $\Dd$.
The simple functor induces a functor $\Ho(\Dd^\square)\to \Ho(\Dd)$.
In general, we are interested in cubical diagrams in $\Ho(\Dd)$
and we do not have a simple functor $\Ho(\Dd)^\square\to \Ho(\Dd)$.
The notion of \textit{$\Phi$-rectified functor} corresponds, roughly speaking, to functors
$F:\Cc\to \Ho(\Dd)$ which are defined on all cubical diagrams in the form
$F^\square:\Cc^\square\to \Ho(\Dd^\square)$,
so that we can take the composition $\Cc^\square\to \Ho(\Dd^\square)\to \Ho(\Dd)$
(see 1.6 of \cite{GN}).
\end{definition}

The following is a relative version of the main result of \cite{GN} on the 
extension of functors from schemes with values in cohomological descent categories.
\begin{theorem}[\cite{GN}, Theorem 2.3.6] \label{extensiorel}Let $\Dd$ be a cohomological descent category with simple functor $\mathbf{s}$
and let
$$F:\Vv_{\kk}^2\lra \Ho(\Dd)$$ be
a contravariant $\Phi$-rectified functor satisfying:
\begin{enumerate}
 \item [{(F1)}] $F(\emptyset,\emptyset)$ is the final object of $\Dd$ and $F((X,U)\sqcup (Y,V))\to F(X,U)\times F(Y,V)$ is an isomorphism.
 \item [{(F2)}] If $(X_\bullet,U_\bullet)$ is an elementary acyclic square of $\Vv_{\kk}^2$, then
$\mathbf{s}F(X_\bullet,U_\bullet)$ is acyclic.
\end{enumerate}
Then there exists a contravariant $\Phi$-rectified
functor 
$$F':\Sch{\kk}\lra \Ho(\Dd)$$
such that:
\begin{enumerate}
\item [(1)]If $(X,U)$ is an object of $\Vv_{\kk}^2$, then $F'(U_\infty)\cong F(X,U)$.
 \item [(2)]If $U_{\bullet}$ is an acyclic square of $\Sch{\kk}$, then $\mathbf{s}F'(U_{\bullet})$ is acyclic.
 \end{enumerate}
In addition, the functor $F'$ is essentially unique.
\end{theorem}

\subsection{Weight filtration on the complex of singular cochains}
Let $X$ be a topological space. Both the Godement cosimplicial resolution functor and the global
sections functor define functors of sheaves of filtered algebras. Also, as in 
Theorem \ref{cohdesccats}, the trivial filtration on $C_*(\Delta^\bullet)$ gives a filtered cosimplicial
normalization functor $N^t_{\Ee}$.
This gives a \textit{derived global sections functor} for filtered $\Ee$-algebras 
\[\RR^t_{\Ee}\Gamma(X,-):=N^t_{\Ee}\circ \Gamma(X,-)\circ G^+.\]

Let $(X,U)$ be an object of $\Vv_\CC^2$ and denote by $j:U\hookrightarrow X$ the inclusion.
Consider the sheaf $\RR_\Ee j_*\underline{R}_U$ and endow it with the canonical filtration $\tau$.
This is a sheaf of filtered $(\Ee,\tau)$-algebras but, by Lemma \ref{maptosigma}, we can consider it as a
sheaf of filtered $(\Ee,t)$-algebras.

\begin{prop}
The assignment $(X,U)\mapsto \RR^t_{\Ee}\Gamma(X,(\RR_{\Ee}j_*\underline{R}_U,\tau))$
defines a functor 
$$\Ww_R':\Vv_\CC^2\lra \Ff\Einf_R^t$$
satisfying the following conditions:
\begin{enumerate}
 \item There is a string of quasi-isomorphisms of $\Ee$-algebras from $\Ww_R'(X,U)$ to  $C^*(U,R)$.
 \item The filtration induced on $H^*(U,R)\cong H^*(\Ww_R'(X,U))$ is the classical weight filtration.
\end{enumerate}
\end{prop}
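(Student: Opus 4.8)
The plan is to prove functoriality first and then the two numbered properties, in each case reducing the multiplicative assertion to its additive counterpart by means of the forgetful functor to $\ch{\Ff R}$, which intertwines $N^t_\Ee$ with the additive filtered normalization $N^t$ and carries $(\RR_\Ee j_*\underline{R}_U,\tau)$ to the additive $(\RR j_*\underline{R}_U,\tau)$.

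I would set up functoriality exactly as in the functoriality part of Theorem \ref{quistocochains_thm}. A morphism $(f,f')\colon (X,U)\to (X',U')$ of $\Vv_\CC^2$ satisfies $f\circ j=j'\circ f'$. The unit of Lemma \ref{co_unit} and the identification $(f')^*\underline{R}_{U'}\cong\underline{R}_U$ give $\underline{R}_{U'}\to\RR_\Ee f'_*\underline{R}_U$; applying $\RR_\Ee j'_*$ and Lemma \ref{compoderived} produces a morphism of sheaves of $\Ee$-algebras $\RR_\Ee j'_*\underline{R}_{U'}\to\RR_\Ee f_*\RR_\Ee j_*\underline{R}_U$ on $X'$. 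Since the canonical filtration $\tau$ is functorial for morphisms of complexes (a chain map preserves cocycles, hence preserves $\tau$), this map is automatically a morphism of filtered sheaves; applying $\RR^t_\Ee\Gamma(X',-)$ together with the compatibility of Lemma \ref{relativesheaf} yields the contravariant structural morphism $\Ww_R'(X',U')\to\Ww_R'(X,U)$, and compatibility with composition reduces to that of the adjunction units. As $R^q j_*\underline{R}_U=0$ for large $q$, the pair $(\RR_\Ee j_*\underline{R}_U,\tau)$ is finitely filtered and non-negatively graded, so $\Ww_R'(X,U)$ indeed lies in $\Ff\Einf_R$.

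For property (1) I forget the filtration. Being a smooth complex variety, $U$ is a complex manifold, hence Hausdorff, paracompact and locally contractible, so Theorem \ref{quistocochains_thm} applies to it. Because $N^t_\Ee$ forgets to $N_\Ee$, the underlying $\Ee$-algebra of $\Ww_R'(X,U)$ is $\RR_\Ee\Gamma(X,\RR_\Ee j_*\underline{R}_U)$; Lemma \ref{relativesheaf} applied to $j\colon U\hookrightarrow X$ gives a quasi-isomorphism $\RR_\Ee\Gamma(U,\underline{R}_U)\to\RR_\Ee\Gamma(X,\RR_\Ee j_*\underline{R}_U)$, and Theorem \ref{quistocochains_thm} joins $\RR_\Ee\Gamma(U,\underline{R}_U)$ to $C^*(U,R)$ by a string of quasi-isomorphisms of $\Ee$-algebras. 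Concatenating these gives the required string from $\Ww_R'(X,U)$ to $C^*(U,R)$.

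For property (2) I forget the $\Ee$-structure, so that the underlying filtered complex of $\Ww_R'(X,U)$ becomes $\RR^t\Gamma(X,(\RR j_*\underline{R}_U,\tau))$: this computes $H^*(U;R)$ equipped with the filtration induced by the canonical filtration $\tau$ on $\RR j_*\underline{R}_U$. This is precisely the filtered complex used by Deligne \cite{DeHII} over $\QQ$, and by Guill\'{e}n--Navarro Aznar \cite{GN} and by \cite{CG3} over an arbitrary ring, to define the weight filtration of the smooth open variety $U$, and by their results the induced filtration on $H^*(U;R)$ is Deligne's weight filtration. The main obstacle lies exactly in this last identification: one must control the \emph{d\'{e}calage} relating the canonical filtration $\tau$ on $\RR j_*\underline{R}_U$ to the weight filtration on cohomology (compare 7.1.6 of \cite{DeHIII}) and verify that the trivial filtration $t$ on $C_*(\Delta^\bullet)$ entering $N^t_\Ee$ reproduces the normalization convention of \cite{GN} and \cite{CG3} for which this d\'{e}calage is already accounted for. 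Once the conventions are matched, no further computation is required.
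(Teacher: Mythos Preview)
Your argument is correct and follows essentially the same route as the paper: functoriality as in Theorem~\ref{quistocochains_thm}, property~(1) via Lemma~\ref{relativesheaf} and Theorem~\ref{quistocochains_thm}, and property~(2) by forgetting the $\Ee$-structure and identifying the resulting filtered complex with Deligne's construction as in \cite{DeHII}, \cite{GilletSoule}, \cite{GN}. Your closing worry about d\'{e}calage is overstated: the trivial filtration $t$ on $C_*(\Delta^\bullet)$ is used precisely so that the filtration on $\RR^t_\Ee\Gamma$ is the one coming from $\tau$ on the sheaf, and this \emph{is} the definition of the weight filtration for a good compactification in the cited references, so no further convention-matching or d\'{e}calage computation is needed for property~(2) as stated.
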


\begin{proof}The functoriality of $(X,U)\mapsto \RR^t_{\Ee}\Gamma(X,(\RR_{\Ee}j_*\underline{R}_U,\tau))$ follows as in Theorem \ref{quistocochains_thm}.
Let us prove (1). By Lemma  \ref{relativesheaf}, the $\Ee$-algebras $\RR_{\Ee}\Gamma(X,\RR_{\Ee}j_*\underline{R}_U)$ and 
$\RR_{\Ee}\Gamma(U,\underline{R}_U)$ are naturally quasi-isomorphic. The result now follows from Theorem \ref{quistocochains_thm}.
In particular, we have an isomorphism 
$$H^*(U,R)\cong H^*(\Ww_R'(X,U)).$$
To prove (2), it suffices to note that, when forgetting the $\Ee$-algebra structures,
the definition of $\Ww_R'(X,U)$ coincides with Deligne's construction of the 
weight filtration for smooth compactifications (see \cite{DeHII} for rational coefficients and \cite{GilletSoule}, \cite{GN} for the arbitrary ground ring case).
\end{proof}

We now use the extension criterion of functors of Theorem \ref{extensiorel} to extend the functor $\Ww_R'$ to the category of all schemes.
Recall that $\Ff\Einf_R^\sigma$ denotes the category of filtered $(\Ee,\sigma)$-algebras and 
$\Ho(\Ff\Einf_R^\sigma)$ is its localization with respect to $E_r$-quasi-isomorphisms.
We get:

\begin{theorem}\label{mainteoweight}
 There exists a $\Phi$-rectified functor \[\Ww_R:\Sch{\CC}\lra \Ho(\Ff\Einf_R^\sigma)\] such that:
\begin{enumerate}[(1)]
\item There is a quasi-isomorphism  of $\Ee$-algebras $\Ww_R(X)\simeq C^*(X,R)$ for every $X\in \Sch{\CC}$.
\item If $(X,U)$ is a pair in $\Vv_{\CC}^2$ then $\Ww_R(U)\cong \Ww_R'(X,U)$.
 \item For every $p,q\in\ZZ$ and every acyclic square of $\Sch{\CC}$
$$
\xymatrix{
\ar[d]\wt Y\ar[r]&\wt X\ar[d]\\
Y\ar[r]&X
}
$$
there is a long exact sequence of $R$-modules
\begin{align*}
\cdots\to E_2^{p,q}(\Ww_R(X))\to
E_2^{p,q}(\Ww_R(\wt X))\oplus E_2^{p,q}(\Ww_R(Y)) \to \\\to
E_2^{p,q}(\Ww_R(\wt Y))\to E_2^{p+1,q}(\Ww_R(X))\to \cdots
\end{align*}

\item The filtration induced by $\Ww_R(X)$
on $H^n(X,R)$ is the classical weight filtration.
\end{enumerate}
\end{theorem}

\begin{proof}Since $\Ww_R'$ has values in $\Ff\Einf_R^t$, it automatically gives a $\Phi$-rectified functor
\[\Ww_R':\Vv_\CC^2\lra \Ff\Einf_R^t\lra \Ff\Einf_R^\sigma\lra \Ho(\Ff\Einf_R^\sigma)\]
when composing with the reciprocal image functor of the map $(\Ee,\sigma)\to (\Ee,t)$ and the localization functor at $E_1$-quasi-isomorphisms.
By Theorem \ref{cohdesccats} the triple $(\Ff\Einf_R^\sigma,N_\Ee^\sigma,\Qq_1)$ is a  cohomological descent category.
Therefore by Theorem \ref{extensiorel} it suffices to show that $\Ww_R'$
satisfies properties (F1) and (F2).
Property (F1) is trivial. 
Condition (F2)
is equivalent to the condition that
the map
$${\Ww_R'}(X,U)\lra N_\Ee^\sigma(\Ww_R'(X_\bullet,U_\bullet))$$ is an $E_1$-quasi-isomorphism
for every elementary acyclic square  
$(X_\bullet,U_\bullet)\to (X,U)$ of $\Vv_\CC^2$.
This is a statement at the level of the underlying cochain complexes, and hence it suffices to show that
the map
$${\Ww_R'}(X,U)\lra \mathbf{s}^\sigma(\Ww_R'(X_\bullet,U_\bullet))$$ is an $E_1$-quasi-isomorphism,
where now 
$$\Ww_R'(X,U)=\RR\Gamma(X,(\RR j_*\underline{R}_U,\tau))$$
is defined via the derived additive functors $\RR j_*$ and $\RR\Gamma(X,-)$,
and $\mathbf{s}^\sigma$ is the total simple functor that sends a codiagram of filtered complexes
complexes $(K^\bullet,W^\bullet)$ to the filtered complex 
$$W_p(\mathbf{s}^\sigma(K^\bullet,W^\bullet))=
\bigoplus_{|\alpha|=0} W_pK^\alpha\oplus \bigoplus_{|\alpha|=1} W_{p+1}K^\alpha[-1]\oplus\cdots \oplus \bigoplus_{|\alpha|=r} W_{p+r}K^\alpha[-r] \oplus\cdots.
$$
The result now follows as in the definition of the weight filtration in cohomology
(see Theorem 2.17 of \cite{CG3}). We sketch the main steps for completeness.

For any codiagram of filtered complexes $K^\bullet$, we have that (see Proposition 2.7 of \cite{CG3})
$$E_1^{*,q}(\mathbf{s}^\sigma(K^\bullet))\stackrel{\sim}{\longleftrightarrow}\mathbf{s}E_1^{*,q}(K^\bullet).$$
Therefore
the above reduces to prove that 
for all $q\in\ZZ$, the sequence 
\begin{align*}\cdots 
\to E_2^{*,q}(\Ww_R'(X,U))\to
E_2^{*,q}(\Ww_R'(\wt X,\wt U))\oplus E_2^{*,q}(\Ww_R'(Y, U\cap Y)) \to\\\to E_2^{*,q}(\Ww_R'(\wt Y, \wt U\cap \wt Y))\to E_2^{*+1,q}(\Ww_R'(X,U))\to\cdots
\end{align*}
is exact, where 
$$
\xymatrix{
\ar[d]_g(\wt Y,\wt U\cap \wt Y)\ar[r]^j&(\wt X,\wt U)\ar[d]^{f}\\
(Y,U\cap Y)\ar[r]^i&(X,U)
}
$$
is an elementary acyclic square. We may identify
$E_1^{*,q}(\Ww_R'(X,U))$ with the Gysin complex
$G^q(X,U)^*$, given in each degree by 
$$G^q(X,U)^p:=H^{q+2p}(D^{(-p)},R)\text{, where } D=X-U$$
(see \cite{DeHII}, 3.1.9 and 3.2.4, see also Section 4.3 of \cite{PS}).
Now, by Proposition 4.5 of \cite{CG3}, we have:
 \begin{enumerate}
 \item If $Y\nsubseteq D$ then the simple of the double complex
 \[0\to G^q(X,U)\to G^q(\wt X,\wt U)\oplus G^q(Y,U\cap Y)\to G^q(\wt Y,\wt U\cap \wt Y)\to 0
\]
 is acyclic for all $q$

 \item If $Y\subset D$ then $G^q(X,U)\to G^q(\wt X,\wt U)$ is a quasi-isomorphism for all $q$.\qedhere
\end{enumerate}
\end{proof}

\begin{remark}\label{alsovalid}After minor modifications, the above theorem is also valid in the following contexts:
\begin{enumerate}[(1)]
\item \textit{Real algebraic varieties.} By restricting to $\FF_2$-coefficients, for which  
Poincar\'{e}-Verdier duality holds, one may define a functor
$\Ww_{\FF_2}:\Sch{\RR}\lra \Ho(\Ff\Einf_{\FF_2}^\sigma)$.
This result promotes the results of MacCrory and Parusi{\'n}ski \cite{MCPI}, \cite{MCPII}
on the existence of a weight filtration for the $\FF_2$-homology of real algebraic varieties,
from the additive, to the $E_\infty$-setting.
\item \textit{Analytic spaces.} A version of the extension criterion of functors for analytic spaces (see Theorem 3.9 of \cite{CG3}),
gives a functor $\Ww_R:\mathrm{An}_\CC^\infty\lra \Ho(\Ff\Einf_R^\sigma)$, where $\mathrm{An}_\CC^\infty$ denotes the category of complex analytic spaces
endowed with an equivalence class of compactifications. 
The same is true for real analytic spaces and $\FF_2$-coefficients. Therefore we also obtain a multiplicative weight filtration in these settings.

\item \textit{Rational homotopy.}
The theory of mixed Hodge structures in rational homotopy 
gives a weight filtration on 
Sullivan's algebra $\Aa_{PL}(X)$ of piece-wise linear forms.
Also, Theorem \ref{mainteoweight} gives a filtered $E_\infty$-algebra $\Ww_\QQ(X)$
over $\QQ$. These two constructions 
are naturally weakly equivalent in the category of filtered $(\Ee,\sigma)$-algebras.

\item \textit{Rational homotopy of analytic spaces.}
By the previous remark, we also obtain a weight filtration on the rational homotopy type of every complex analytic space endowed with an equivalence class of compactifications, extending the theory of mixed Hodge structures on the rational homotopy type of complex algebraic varieties, to complex analytic spaces. In the analytic setting, 
the weight filtration is not part of a mixed Hodge structure, but it is a new and well-defined invariant.
\end{enumerate}
\end{remark}

\subsection{Steenrod operations on the weight spectral sequence}
The existence of a weight filtration compatible with the $E_\infty$-structure over $\FF_\ell$ gives a whole new family of invariants,
given by Steenrod (for $\ell=2)$ and reduced $\ell$-th power operations (for $\ell>2$) on the successive pages of the weight spectral sequence. 
These operations may be useful in order to distinguish homotopy types of algebraic varieties.

\begin{definition}\label{multiweightdef}
Let $X$ be a complex algebraic variety.
The \textit{weight spectral sequence of $X$} with coefficients in a commutative ring $R$
is the spectral sequence 
associated with the filtered $(\Ee,\sigma)$-algebra $\Ww_R(X)$ given in Theorem \ref{mainteoweight}:
$$E_1^{-p,q}(X,R):=H^{q-p}(Gr_{p} \Ww_R(X))\Longrightarrow H^{q-p}(X,R).$$
\end{definition}

\begin{remark}
As shown by Deligne, for $R=\QQ$ the weight spectral sequence degenerates at the $E_2$ page, but this is not the case for a general coefficient ring (see \cite{GilletSoule}).
However, for $r\geq 2$, the pair
$(E_r(X,R),d_r)$ is a well-defined algebraic invariant of $X$, which differs in general from the 
associated graded groups to the weight filtration in cohomology.
\end{remark}

In the general quasi-projective singular case, the weight spectral sequence and its multiplicative structure
is hard to describe, as it is given by the normalization of Lemma \ref{E1lemma}. In the following particular cases it becomes much more accessible.

\subsubsection*{Smooth varieties}
Given a smooth complex variety $U$ consider a smooth compactification $U\hookrightarrow X$ in such a way that the complement $D:=X-U=D_1\cup\cdots \cup D_N$ is  the union of
of irreducible smooth varieties $D_i$ meeting transversally.
Let $D^{(0)}=X$ and for all $p>0$, denote by $D^{(p)}=\bigsqcup_{|I|=p}D_I$
the disjoint union of all 
$p$-fold intersections 
$D_I:=D_{i_1}\cap\cdots \cap D_{i_p}$ where $I=\{i_1,\cdots,i_p\}$ denotes an ordered subset of $\{1,\cdots,N\}$.
Since $D$ has normal crossings, it follows that $D^{(p)}$ is a smooth projective variety of dimension $n-p$. 
For $1\leq k\leq p$, denote by $j_{I,k}:D_I\hookrightarrow D_{I\setminus \{i_k\}}$ the inclusion 
and let \[j_{p,k}:=\bigoplus_{|I|=p} j_{I,k}:D^{(p)}\hookrightarrow D^{(p-1)}.\]
This defines a simplicial resolution
$D_\bullet=\{D^{(p)}, j_{p,k}\}\lra D$, called the \textit{canonical hyperresolution} of $D$.
The weight spectral sequence of Definition \ref{multiweightdef}, with coefficients in an arbitrary commutative ring $R$ is then given by the relative cohomology groups
\[E_1^{-p,q}(U)=H^q(X,X- D^{(p)}, R).\]
The differential 
 \[d_1:E_1^{-p,q}(U)=H^{*}(X,X-D^{(p)})\lra E_1^{-p+1,q}(U)=H^*(X,X-D^{(p-1)})\] is 
naturally induced by the inclusion $X-D^{(p-1)}\hookrightarrow X-D^{(p)}$
as a combinatorial sum 
$$\sum_{k=1}^p (-1)^{k} j_{p,k}^*:H^{*}(X,X-D^{(p)})\lra H^*(X,X-D^{(p-1)}).$$
induced by the inclusions $j_{p,k}$.
 
\begin{remark}In the additive setting, 
Deligne's weight spectral sequence is usually written as 
\[E_1^{-p,q}(U)=H^{q-2p}(D^{(p)})\]
and the differential is the combinatorial sum of Gysin maps. 
This uses the Thom isomorphism $H^{*-2p}(D^{(p)})\cong H^*(X,X-D^{(p)})$.
Note that Steenrod operations are in general not compatible with the Thom isomorphism and so this identification should not be done in the multiplicative setting of $E_\infty$-algebras.
It does work, however, when studying the multiplicative weight spectral sequence with coefficients in $\QQ$, for which there is a commutative algebra structure (see \cite{Mo}) as well as in some particular cases,
as we will later see.
\end{remark}

The vertical operations  
\[P^s_v:E_1^{-p,q}(U)\lra E_1^{-p,q+s}(U)\quad,\text{ for }R=\FF_2\]
and 
\[P^s_v:E_1^{-p,q}(U)\lra E_1^{p,q+2s(\ell-1)}(U)\quad,\text{ for }R=\FF_\ell\text{ with }\ell>2\]
are induced by the corresponding operations $H^*(X,X-D^{(p)})$ on relative cohomology.
These are compatible with the maps \[H^{*}(X,X-D^{(p)})\lra H^*(X,X-D^{(p-1)})\]
and so are compatible with $d_1$. Therefore the above operations give well-defined operations in $E_2$. In addition, Lemma \ref{myoperations} gives the corresponding non-vertical operations on $E_2$.

By Remark \ref{alsovalid} the above discussion is also valid in the case of real algebraic varieties and coefficients in $\FF_2$, for which it is easy to compute multiple interesting examples.

\begin{example}
Consider the quadric hypersurface $\mathbf{Q}$ of $\RR\PP^7$ given by
\[\mathbf{Q}=\{(x_0:\cdots:x_7)\in \RR\PP^7; x_0^2+x_1^2+x_2^2+x_3^2-x_4^2-x_5^2-x_6^2-x_7^2=0\}.\]
Let $U=\RR\PP^7-\mathbf{Q}$. 
The cohomology of $\mathbf{Q}$ is isomorphic to
$H^*(\RR\PP^3,R)\otimes_R H^*(S^3,R)$. This may be checked using the Gysin sequence for the sphere bundle
$\mathbf{Q}\to \RR\PP^3$ defined by projecting to the last components
$(x_0:\cdots:x_7)\mapsto (x_4:\cdots:x_7)$.
In particular we may write
\[H^*(\mathbf{Q},\FF_2)\cong \FF_2[t,s]/(t^4,s^2), \text{ with }|t|=1, |s|=3.\]
Cartan's formula allows us to determine the Steenrod operations on $H^*(\mathbf{Q},\FF_2)$:
\[Sq^s(t^is^j)=Sq^s(t^i)s^j.\]
Therefore for $s>0$ the only non-trivial operations on $H^*(\mathbf{Q},\FF_2)$ are 
\[Sq^1(t)=t^2\text{ and }Sq^1(ts)=t^2s.\]
To compute the weight spectral sequence of $U$ we still need to compute the relative cohomolohy $H^*(\RR\PP^7,\RR\PP^7-\mathbf{Q})$. By the Thom isomorphism we have an isomorphism
\[H^{*-1}(\mathbf{Q})\cong H^*(\RR\PP^7,\RR\PP^7-\mathbf{Q})\]
as modules over the Steenrod algebra. Indeed, while the Thom isomorphism does not commute with Steenrod operations for an arbitrary vector bundle,
since the normal bundle to the embedding 
 $\mathbf{Q}\to \RR\PP^7$ is trivial, the obstructions to commutativity given by the Stiefel-Whitney classes of this bundle, vanish for $i>0$.
 Writing 
 \[H^*(\RR\PP^7,\FF_2)\cong \FF_2[x]/(x^8)\] 
We obtain the following vertical Steenrod operations in $E_1$:

\[
\xymatrix@R=12pt@C=36pt{
&\tau t^3s&x^7\\
&\tau t^2s&x^6\ar[u]^-{Sq^1}\\
&\tau ts \ar[u]^-{Sq^1}&x^5\\
E_1^{*,*}(U,\FF_2)\cong &\tau s, \tau t^3&x^4\\
&\tau t^2&x^3\ar@/_1pc/[uu]_{Sq^2}\ar@/_3pc/[uuu]_{Sq^3}\ar[u]^-{Sq^1}\\
&\tau t \ar[u]^-{Sq^1}&x^2\ar@/_1pc/[uu]_{Sq^2}\\
&\tau&x\ar[u]_-{Sq^1}\\
&0&1
}
\]
Now, the differential $d_1$ of the weight spectral sequence is the Gysin map 
 \[i_!:H^{*-1}(\mathbf{Q})\cong \FF_2[t,s]/(t^4,s^2)\lra H^*(\RR\PP^7)\cong \FF_2[x]/(x^8),\]
 given by $\tau t^i\mapsto 0$ and $\tau t^i s\mapsto x^{4+i}$.
We get the following table for $E_2\cong H^*(U)$:
\[
\xymatrix@R=12pt@C=36pt{
&\tau t^3&0\\
&\tau t^2&x^3\\
E_2^{*,*}(U,\FF_2)\cong&\tau t \ar[u]^-{Sq^1}&x^2\\
&\tau&x\ar[u]_-{Sq^1}\\
&0&1
}
\]
In particular, we read non-trivial Steenrod operations $Sq^1$ on both columns of $E_2$.
\end{example}

\subsubsection*{Singular projective varieties}
Let $X$ be a singular complex projective variety of dimension $n$ and let $X_\bullet\to X$ be a cubical hyperresolution of $X$,
where $X_\alpha$ is a smooth projective variety of dimension $\dim (X_\alpha)\leq n-|\alpha|+1$. As shown in \cite{GNPP},
the classical weight spectral sequence for $X$ with coefficients in $\FF_\ell$ is given by
\[
E_1^{p,q}(X,\FF_\ell)=\bigoplus_{|\alpha|=p} H^q(X_\alpha,\FF_\ell)\]
The differential $d_1:E_1^{p,q}(X,\FF_\ell)\lra E_1^{p+1,q}(X,\FF_\ell)$
is given by the combinatorial sum of restriction morphisms $H^q(X_p,\FF_\ell)\to H^q(X_{p+1},\FF_\ell)$.

In contrast, the weight spectral sequence of Definition \ref{multiweightdef} is given in each bidegree by 
\[
E_1^{-p,q}(X,\FF_\ell)=\int_\alpha \Hom(C_p(\Delta^\alpha),H^q(X_\alpha)),
\]
where we used Lemma \ref{E1lemma}. The operations of Lemma \ref{myoperations} and Remark 
\ref{verticals} are available. In particular, the horizontal operations 
\[Sq^s:E_2^{p,0}(X,\FF_\ell)\lra E_2^{p+s,0}(X,\FF_\ell)\text{ for }\ell=2\]
and 
\[P^s:E_2^{p,0}(X,\FF_\ell)\lra E_2^{p+2s(\ell-1),0}(X,\FF_\ell)\text{ for }\ell>2\]
prove to be powerful invariants in order to distinguish homotopy types of algebraic varieties, as we next show.

\begin{example}[Isolated singularities and the dual complex]
Let $X$ be a projective variety with only isolated singularities.
Then for all $p\geq 0$ we have an isomorphism 
\[E_2^{p,0}(X,\FF_2)\cong H^p(\Sigma D_X,\FF_2)\]
where $D_X$ denotes the dual complex associated to $X$, whose homotopy type is an invariant of the variety
(see \cite{Payne},\cite{ABW}) and $\Sigma D_X$ denotes its suspension.
In addition, by a result of Koll\'{a}r \cite{Kollar}, for any finite simplicial complex $V$ there is 
a projective variety $X$ with only isolated singularities 
such that $D_X$ is homotopy equivalent to $V$.
This allows one to produce examples of algebraic varieties that may 
be distinguished by the horizontal operations in $E_2^{*,0}$.
\end{example}

\section{Intersection cohomology}\label{SecInter}

Intersection cohomology is a Poincar\'{e} duality cohomology theory for singular spaces.
It is defined for any topological pseudomanifold and
depends on the choice of a multi-index called \textit{perversity}, measuring how far cycles are
allowed to deviate from transversality. 
Goresky and MacPherson, first defined intersection homology in \cite{GMP1} as the homology
$IH^{\overline p}_*(X)$ of a subcomplex $\mathrm{IC}_*^{\overline{p}}(X)$ of the ordinary chains of $X$,
given by those cycles which meet the singular locus of $X$ with a
controlled defect of transversality with respect to the chosen perversity $\ov p$ and stratification.
Subsequently, Deligne proposed a sheaf-theoretic approach, which was developed in
\cite{GMP2}. In this section, we promote Deligne's additive sheaf to a sheaf carrying an $\Ee$-structure.

\subsection{Algebras with perversities}
The intersection cohomology of a topological pseudomanifold does not define an algebra, but 
it has a product that is compatible
with perversities. This was formalized by Hovey \cite{Hov2}, 
who defined a model structure on the category of perverse differential graded algebras. 
We next review this formalism.

Throughout this section we let $n\geq 0$ be a fixed integer. This integer 
will later be prescribed by a fixed stratification on a topological pseudomanifold.

\begin{definition}
A \textit{perversity} is a sequence of integers $\ov{p}=(p(2),\cdots,p(n))$ 
satisfying the conditions $p(2)=0$ and $p(k)\leq p(k+1)\leq p(k)+1$.
\end{definition}
Denote by $\Pp$ the poset of perversities, where we say that 
$\ov p\leq \ov q$ if and only if $p(k)\leq q(k)$ for all $k$.
Any perversity lies between the trivial perversity $\ov 0=(0,\cdots,0)$
and the top perversity $\ov t=(2,3,4,\cdots)$.

In order to consider multiplicative structures, we want to define a monoidal 
structure on the poset of perversities. For this purpose, we consider an enlarged set 
by letting \[\widehat{\Pp}:=\Pp\cup\{\ov{\infty}\}\]
with the property $\ov p< \ov \infty$ for all $\ov p\in \Pp$.

\begin{definition}
Define a bifunctor $\oplus:\widehat {\Pp}\times \widehat{\Pp}\to \widehat{\Pp}$ by letting 
$\ov p\oplus \ov q$ to be the smallest element $\ov r$ in $\widehat{\Pp}$ such
that $p(k)+q(k)\leq r(k)$ for all $k$. 
\end{definition}

This operation allows an inductive description (see \cite{CST}).
Note that if there is some $k$ such that $p(k)+q(k)>k-2$ then we have $\ov p\oplus \ov q=\ov \infty$.
The triple $(\widehat{\Pp},\oplus,\ov 0)$ is a symmetric monoidal category
with the trivial perversity as left and right identity for $\oplus$.

\begin{definition}
A \textit{perverse $\Ee$-algebra} is given by a functor $A_{\ov \bullet}:\widehat{\Pp}\lra \ch{R}$ together 
with structure morphisms
 \[\theta(\ell):\Ee(\ell)\otimes( A_{\ov p_1}\otimes \cdots\otimes A_{\ov {p}_\ell})\lra
 A_{\ov{p}_1\oplus \cdots\oplus \ov{p}_\ell}\]
 for each $\ell$ and each tuple of perversities $\ov p_1,\cdots, \ov p_\ell$
 such that:
 \begin{enumerate}
 \item The morphisms $\theta(\ell)$ are subject
to natural associativity, unit and $\Sigma_\ell$-equivariance constraints.
\item For every two sets of perversities $\ov p_1,\cdots, \ov p_\ell$ and $\ov q_1,\cdots, \ov q_\ell$
with $\ov p_i\leq \ov q_i$, for all $1\leq i\leq \ell$, the following diagram commutes:
\[\xymatrix{
\ar[d]\Ee(\ell)\otimes (A_{\ov p_1}\otimes \cdots\otimes A_{\ov {p}_\ell})\ar[r]^-{\theta(\ell)}& A_{\ov{p}_1\oplus \cdots\oplus\ov{p}_\ell}\ar[d]\\
\Ee(\ell)\otimes (A_{\ov q_1}\otimes \cdots\otimes A_{\ov {q}_\ell})\ar[r]^-{\theta(\ell)}& A_{\ov{q}_1\oplus\cdots\oplus\ov{q}_\ell}
}\]
where the vertical maps are induced by the poset maps.
 \end{enumerate}
\end{definition}

Equivalently, consider the symmetric monoidal structure on 
$\mathbf{Fun}(\widehat{\Pp},\chb{\geq 0}{R})$ given by 
\[(A_{\ov\bullet}\otimes B_{\ov \bullet})_{\ov p}:= \sum_{\ov q\oplus \ov q'=\ov p}A_{\ov q}\otimes B_{\ov q'}.\]
Then a perverse $\Ee$-algebra is just an $\Ee$-algebra in $\mathbf{Fun}(\widehat{\Pp},\chb{\geq 0}{R})$.
We will denote by $\widehat{\Pp}\Einf_R$ the category of perverse $\Ee$-algebras over $R$.

\begin{remark}\label{T0}
The following construction is due to Hovey \cite{Hov2}. 
For each perversity $\ov p$ there is a forgetful functor from perverse complexes to complexes
\[U_{\ov p}:\widehat{\Pp}\ch{R}\lra \ch{R}\]
given by $A_{\ov \bullet}\mapsto A_{\ov p}$.
This functor has an adjoint $T_{\ov p}$ defined by $T_{\ov p}(A):=A$ if $\ov q\leq \ov p$ and zero otherwise.
The functors $U_{\ov 0}$ and  $T_{\ov 0}$ associated to the trivial perversity are monoidal
so we obtain an adjoint pair 
\[U_{\ov 0}:\widehat{\Pp}\Einf_R \rightleftarrows \Einf_R:T_{\ov 0}.\]
\end{remark}

\subsection{Multiplicative intersection complex}
Two main ingredients in the definition of Deligne's intersection complex are truncations of complexes of sheaves and derived direct image functors.
We begin by reviewing these constructions in the setting of sheaves of perverse $\Ee$-algebras.

\begin{definition}
Let $A\in\chb{\geq 0}{R}$ be a cochain complex and $k\in\ZZ_{\geq 0}$ a non-negative integer.
The \textit{$k$-truncation} of $A$ is the complex  $\tau_{\leq k}A$ given by
\[(\tau_{\leq k}A)^m=\left\{
\begin{array}{ll}
0& k>m\\
\Ker(d)\cap A^m& k=m\\
A^m& k<m
\end{array}
\right..\]
\end{definition}
For every $k\geq 0$, this defines a $k$-truncation functor 
\[\tau_{\leq k}:\chb{\geq 0}{R}\lra \chb{\geq 0}{R}\]
satisfying the following properties (see for example Section 1.14 of \cite{GMP2}):
\begin{enumerate}
 \item $\tau_{\leq k}\circ \tau_{\leq k'}A=\tau_{\leq \mathrm{min}(k,k')}A$.
 \item $\tau_{\leq k}A\subseteq  \tau_{\leq k'}A\subseteq A$ for every $k\leq k'$.
 \item If $\varphi:A\to B$ induces isomorphisms $H^i(A)\cong H^i(B)$ for all $i\leq k$ then 
 the $k$-truncated map $\tau_{\leq k}\varphi:\tau_{\leq k}A\to \tau_{\leq k}B$ is a quasi-isomorphism.
 \item If $f:X\to Y$ is a continuous map of topological spaces, and $\Aa$ is a sheaf of complexes,
 then $\tau_{\leq k}f^*\Aa\cong f^*\tau_{\leq k}\Aa$.
\end{enumerate}

The truncation functor $\tau_{\leq k}$ is not a lax monoidal,
but it defines a lax monoidal functor
\[\tau_{\leq \bullet(k)}: \mathbf{Fun}(\widehat{\Pp},\chb{\geq 0}{R})\lra \mathbf{Fun}(\widehat{\Pp},\chb{\geq 0}{R})\]
by letting
\[\left(\ov p\mapsto A_{\ov p}\right)\mapsto \left(\ov p\mapsto \tau_{\leq p(k)}A_{\ov p}\right).\]
Therefore it induces a $k$-truncation functor 
$\tau_{\leq \bullet(k)}$ in the category of perverse $\Ee$-algebras.

Note as well that the constructions of Section 
\ref{sheaveseinfty} naturally extend from $\Ee$-algebras to perverse $\Ee$-algebras.
In particular, we have a $k$-truncation functor on the category of sheaves of perverse $\Ee$-algebras
$\Sh_X(\widehat{\Pp}\Einf_R)$
and for any continuous map
$f:X\to Y$
we have  derived direct image and inverse functors
\[\RR_\Ee f_*:\Sh_X(\widehat{\Pp}\Einf_R)\lra \Sh_Y(\widehat{\Pp}\Einf_R)\text{ and }f^*:\Sh_Y(\widehat{\Pp}\Einf_R)\lra \Sh_X(\widehat{\Pp}\Einf_R).\]
Also, by Lemma \ref{co_unit} there are maps
$1\to \RR_\Ee f_*\circ f^*$ and  $f^*\circ \RR_\Ee f_*\stackrel{\sim}{\leftarrow} \bullet \to 1$
in the category of sheaves of perverse $\Ee$-algebras.

Recall that a
 \textit{topological pseudomanifold of dimension $n$} is a space $X$ admitting a stratification 
\[X=X_n\supset X_{n-2}\supset X_{n-3}\supset\cdots\supset X_1\supset X_0\supset \emptyset\]
such that $X-X_{n-2}$ is an oriented dense $n$-manifold 
and $X_i-X_{i-1}$ is an $i$-manifold along which the normal structure of $X$ is locally trivial.

Via Whitney's theory of stratifications, complex and real algebraic varieties or, more generally, complex and real analytic varieties are examples of topological pseudomanifolds, with $\Sigma=X_{n-2}$ the singular set of the space. We refer to \cite{GMP1} and \cite{GMP2} for details on topological stratifications.

Throughout this section we will let $X$ be a topological pseudomanifold
with singular locus $\Sigma$ and a fixed stratification $\{X_i\}$ with $\Sigma=X_{n-2}$.
Define sets $U_k:=X-X_{n-k}$ and denote by 
\[i_k:U_k\to U_{k+1}\text{ and }j_k:U_{k+1}-U_k\to U_{k+1}.\]
the inclusions. Note that $U_2=X-\Sigma$ is the regular part of $X$ and $U_{n+1}=X$.

\begin{definition}\label{defi_ICE}
The \textit{intersection $\Ee$-algebra $\mathcal{IC}_{\overline{\bullet}}(X,R)$ of $X$} 
is the sheaf of perverse $\Ee$-algebras defined inductively as follows:
let $\mathbb{P}_{\ov\bullet}^{(2)}$ be the sheaf of perverse $\Ee$-algebras
given by the constant sheaf on $X-\Sigma$ in each perversity:
\[\mathbb{P}_{\ov p}^{(2)}:=\underline{R}_{X-\Sigma}\text{ for all }\ov p\in \widehat{\Pp}.\]
 For $k\geq 2$, define
 \[\mathbb{P}_{\ov\bullet}^{(k+1)}:= \tau_{\leq \bullet (k)}\circ \RR_\Ee i_{k*}\mathbb{P}_{\ov\bullet}^{(k)}.\]
We then let 
\[\mathcal{IC}_{\overline{\bullet}}(X,R):=\mathbb{P}_{\ov\bullet}^{(n+1)}.\]
In each perversity, we have:
\[\mathcal{IC}_{\overline{p}}(X,R)= 
\tau_{\leq {p}(n)}\circ \RR i_{n*}\circ 
\cdots\circ 
\tau_{\leq {p}(3)}\circ \RR i_{3*}\circ \tau_{\leq {p}(2)}\circ \RR i_{2*}\underline{R}_{U_2}.\]
\end{definition}
Therefore, by forgetting the $\Ee$-algebra structure and fixing a finite perversity $\ov p$, 
we recover Deligne's intersection complex 
given in \cite{GMP2}, modulo a shift by $n$ in the degrees of the complexes.
In particular, the intersection cohomology of $X$ in degree $i$ and perversity $\ov p$
is given by 
\[IH^i_{\ov p}(X,R)\cong \HH^{i}(\mathcal{IC}_{\overline{p}}(X,R))\cong H^{i}(
\RR_\Ee\Gamma(X,\mathcal{IC}_{\overline{p}}(X,R)). 
\]
For the perversity $\ov\infty$,
by Lemma \ref{compoderived}, the sheaf $\mathcal{IC}_{\overline{\infty}}(X,R)$ is naturally quasi-isomorphic to 
the sheaf $\RR_\Ee i_* \underline{R}_{X-\Sigma},$ where $i:X-\Sigma\to X$ denotes the inclusion.
Therefore its cohomology is just the cohomology of $X-\Sigma$, the regular part of $X$:
\[H^i(X-\Sigma,R)\cong \HH^{i}(\mathcal{IC}_{\overline{\infty}}(X))\cong H^{i}(
\RR_\Ee\Gamma(X,\mathcal{IC}_{\overline{\infty}}(X)).\]

We next show that the above sheaf of perverse $\Ee$-algebras is uniquely determined up to quasi-isomorphism by a set of axioms.

Given a sheaf $\Aa$ on $X$ and a subset $i:U\subseteq X$ we will denote by $\Aa|U:=i^*\Aa$
the restriction of $\Aa$ to $U$. By Remark \ref{rmkadjmor} this construction is well-defined in the category of
sheaves of (perverse) $\Ee$-algebras.
For a sheaf $\Aa\in \Sh_X(\widehat{\Pp}\Einf_R)$, its hypercohomology $\HH^*(\Aa)$ is also a sheaf of perverse algebras and we will denote 
$\HH^*(\Aa)_{\ov p}$ the sheaf at perversity $\ov p$.

\begin{definition}\label{axioms}
 A sheaf $\Aa\in \Sh_X(\widehat{\Pp}\Einf_R)$ is said to \textit{satisfy the set of axioms $[\mathrm{AX}]$} if for each perversity $\ov p$:
 \begin{enumerate}
  \item [$(\mathrm{AX}_0)$] The restriction $\Aa|X-\Sigma$ of $\Aa$ to ${X-\Sigma}$ is quasi-isomorphic as a sheaf of perverse $\Ee$-algebras to the sheaf $\mathbb{P}_{\ov \bullet}$ given by the constant sheaf
  $\mathbb{P}_{\ov p}:=\underline{R}_{X-\Sigma}$.
  \item  [$(\mathrm{AX}_1)$] $\HH^i(\Aa)_{\ov p}=0$ for all for all $i<0$.
  \item  [$(\mathrm{AX}_2)$]
  $\HH^i(\Aa|U_{k+1})_{\ov p}=0$ for all $i>p(k)$, $k\geq 2$.
  \item  [$(\mathrm{AX}_3)$] The $\ov p$-\textit{attaching maps} 
  \[\HH^i(j_k^{*}\Aa|U_{k+1})_{\ov p}\lra \HH^i(j_k^*\circ\RR_\Ee i_{k*}\circ i_k^*\Aa|U_{k+1})_{\ov p}\]
  obtained by restricting the natural morphisms
  \[\Aa|U_{k+1}\to \RR_\Ee i_{k*}\circ i_k^*\Aa|U_{k+1},\]
  are isomorphisms for all $i\leq p(k)$ and $k\geq 2$.
 \end{enumerate}
\end{definition}

\begin{lemma}\label{satisfies}
The intersection $\Ee$-algebra $\mathcal{IC}_{\overline{\bullet}}(X,R)$ of Definition \ref{defi_ICE}
satisfies $[\mathrm{AX}]$.
\end{lemma}
\begin{proof}
The restriction of $\mathcal{IC}_{\overline{\bullet}}(X,R)$ to $X-\Sigma$ 
is by definition the sheaf 
\[(i_n\circ \cdots \circ i_3\circ i_2)^*\circ  \tau_{\leq {\bullet}(n)}\circ \RR_\Ee i_{n*}\circ 
\cdots\circ 
\tau_{\leq {\bullet}(3)}\circ \RR_\Ee i_{3*}\circ \tau_{\leq {\bullet}(2)}\circ \RR_\Ee i_{2*}\mathbb{P}_{\ov\bullet}. \]
Since $\tau_{\leq \bullet(k)}$ commutes with inverse images and 
\[\tau_{\leq \bullet(k)}\circ \tau_{\leq \bullet(k')}=\tau_{\leq \bullet(\mathrm{min}(k,k'))},
\]
using the adjunction zig-zag 
\[i_k^* \circ \RR_\Ee i_{k*}\leftarrow \bullet\lra 1\]
we obtain a zig-zag of morphisms of perverse $\Ee$-algebras 
\[\mathcal{IC}_{\overline{\bullet}}(X,R)|X-\Sigma 
\longleftarrow\bullet\lra\tau_{\leq \bullet(2)}\mathbb{P}_{\ov\bullet}=\mathbb{P}_{\ov\bullet},\]
where in the last identity we used the fact $\mathbb{P}_{\ov\bullet}$ is concentrated in degree 0.
The fact that this map is a quasi-isomorphism and the verification of the remaining axioms, which are independent on the multiplicative structure, follows from the additive case 
(see \cite{GMP2}).
In fact, note that the truncation functors are especially designed so 
that the stalk vanishing condition $(\mathrm{AX}_2)$ holds.
\end{proof}

\begin{theorem}\label{uniquenessIC}
If a sheaf $\Aa\in \Sh_X(\widehat{\Pp}\Einf_R)$ satisfies the set of axioms $[\mathrm{AX}]$, then $\Aa$ is naturally quasi-isomorphic as a sheaf of perverse $\Ee$-algebras 
to the intersection $\Ee$-algebra  $\Ss:=\mathcal{IC}_{\overline{\bullet}}(X,R)$.
\end{theorem}
\begin{proof}
We adapt the proof in the additive setting (see for example \cite[Theorem 4.2.1]{BanaglTop}).
By $(\mathrm{AX}_0)$, on $U_2=X-\Sigma$ we have
that $\Aa|U_2$ is quasi-isomorphic to $\Ss|U_2$. Assume inductively that we have constructed a string of quasi-isomorphisms 
from $\Aa|U_k$ to $\Ss|U_k$. 
Recall from Remark \ref{rmkadjmor} that the adjunction morphism
\[\Aa|U_{k+1}\lra \RR_\Ee i_{k*}\circ i^*_k(\Aa|U_{k+1})\]
is a morphism of sheaves of perverse $\Ee$-algebras. This induces a morphism
on truncations 
\[\tau_{\leq  \bullet(k)}\Aa|U_{k+1}\stackrel{\varphi}{\lra}
\tau_{\leq  \bullet(k)}\circ \RR_\Ee i_{k*}\circ i^*_k(\Aa|U_{k+1})\]
Also, we have an inclusion of sheaves of perverse $\Ee$-algebras
\[\psi: \tau_{\leq  \bullet(k)}\Aa|U_{k+1}\hookrightarrow \Aa|U_{k+1}\]
which by condition $(\mathrm{AX}_2)$ is a quasi-isomorphism.
Composing we get maps
\[\Aa|U_{k+1}\stackrel{\psi}{\longleftarrow}\tau_{\leq  \bullet(k)}\Aa|U_{k+1}
\stackrel{\varphi}{\lra}
\tau_{\leq \bullet(k)}\circ \RR_\Ee i_{k*}\circ i^*_k(\Aa|U_{k+1})\cong 
\tau_{\leq  \bullet(k)}\circ \RR_\Ee i_{k*}(\Aa|U_{k}).
\]
By induction hypothesis, we have a string of quasi-isomorphisms
\[\tau_{\leq \bullet(k)}\circ \RR_\Ee i_{k*}(\Aa|U_k)\stackrel{\xi}{\longleftrightarrow} 
\tau_{\leq \bullet(k)}\circ \RR_\Ee i_{k*}(\Ss|U_k)=\Ss|U_{k+1}.
\]
This defines a string of morphisms of perverse $\Ee$-algebras from $\Aa|U_{k+1}$ to $\Ss|U_{k+1}$ extending 
the string from $\Aa|U_{k}$ to $\Ss|U_{k}$. It only remains to show that $\varphi$ is a quasi-isomorphism over 
$U_{k+1}-U_k$. But this follows from $(\mathrm{AX}_3)$. 
\end{proof}

\subsection{Applications}
We compare the different constructions related to the
presence of multiplicative structures in intersection cohomology that
already existed in the literature. We show that our intersection complex of perverse $\Ee$-algebras recovers all of them.

\subsubsection*{Steenrod squares in intersection cohomology}
In \cite{Goresky}, Goresky showed that Steenrod's construction of $\cup_i$-products induces well-defined operations
\[Sq^s:IH_{\ov p}^*(X,\FF_2)\lra IH_{2\ov p}^{*+s}(X,\FF_2)\] on the intersection
cohomology of every topological pseudomanifold $X$, for any perversity $\ov p$
satisfying the condition $2 p(k)\leq k-2$ for all $k$.
These operations satisfy a uniqueness property which amounts to saying that the operations are determined by their value on the regular part of the pseudomanifold (see \cite[Lemma 3.6]{Goresky}). Note that, via axiom $(\mathrm{AX}_0)$, Theorem \ref{uniquenessIC} also states that the $\Ee$-algebra structure is uniquely determined by the algebra structure defined on the regular part of the space. This gives:
\begin{corollary}
The Steenrod operations introduced by Goresky in \cite{Goresky} coincide with the Steenrod operations
determined by the perverse $\Ee$-algebra structure of $\mathcal{IC}_{\overline{\bullet}}(X,R)$ of Definition \ref{defi_ICE}.
\end{corollary}

\begin{remark}
Note that in our setting, the Steenrod operations on intersection cohomology
are also well-defined outside Goresky's bounds
($2 p(k)\leq k-2$) since we added the $\ov\infty$-perversity.
Therefore we obtain operations 
 \[Sq^s:IH_{\ov p}^m(X,\FF_2)\lra IH_{2 \ov p}^{m+s}(X,\FF_2)\]
 for any perversity $\ov p$.
\end{remark}

\subsubsection*{Blown-up cochains}
In \cite{CST}, \cite{CSTGor}, \cite{CSTBl}, the authors introduced and studied a functor of singular cochains $\widetilde{N}^*_{\ov p}$,
the \textit{functor of blown-up cochains}, 
suitable to study products and cohomology
operations of intersection cohomology of stratified spaces.
For any topological pseudomanifold $X$ and any perversity $\ov p\in\Pp$
there is a complex $\widetilde{N}^*_{\ov p}(X,R)$ equipped with a cup product 
whose construction is related to the classical cup product on the singular cochain complex.

When $R=\FF_2$, the complex $\widetilde{N}^*_{\ov p}(X,\FF_2)$ carries an action
of $\Ee(2)$, making it into a perverse $\Ee(2)$-algebra. This gives $\cup_i$-products and hence Steenrod operations,
which are shown to
coincide with the operations introduced by Goresky.
Furthermore, the original bounds are improved, giving 
operations
\[Sq^s:IH_{\ov p}^m(X,\FF_2)\lra IH_{\ov\Ll(\ov p,s)}^{m+s}(X,\FF_2),\]
 where $\ov \Ll(\ov p,s)$ is the perversity given by 
 \[\Ll(\ov p,s)(k):=\mathrm{min}\{2 p(k),p(k)+s\}\]
 and $\ov p$ is any perversity.
 
By forgetting structure, we obtain a functor from perverse $\Ee$-algebras to perverse $\Ee(2)$-algebras, and
there is a sheafification $\widetilde{\mathcal{N}}^*_{\ov \bullet}(X,\ZZ)$
which satisfies the corresponding version for $\Ee(2)$-algebras 
of the set of axioms $[\mathrm{AX}]$ (see \cite{CSTSheaf}). In fact, 
one may naturally extend the $\Ee(2)$-algebra structure on the complex
$\widetilde{N}^*_{\ov p}(X,\FF_2)$ to an $\Ee$-algebra structure in such a way that its sheafification satisfies the set of axioms $[\mathrm{AX}]$.
These facts automatically give:

\begin{theorem}\label{blowncochainsteo}
The sheaves $\widetilde{\mathcal{N}}^*_{\ov \bullet}(X,\ZZ)$ and $\mathcal{IC}_{\overline{\bullet}}(X,\ZZ)$
are naturally quasi-isomorphic as sheaves of perverse $\Ee$-algebras over $\ZZ$.
\end{theorem}

\subsubsection*{Commutative algebras over a field of characteristic zero}Fix $\kk$ a field of characteristic zero.
As explained in Remark \ref{remTW}, when working over $\kk$ we can use Navarro-Aznar's Thom-Whitney 
simple instead of the normalization functor of $\Ee$-algebras, to obtain derived functors for sheaves of commutative dg-algebras.
As a consequence, the results of this section are equally valid for commutative dg-algebras over $\kk$, by replacing 
the operad $\Ee$ with the commutative operad and letting $R=\kk$ everywhere. 

We obtain
a sheaf $\mathcal{IC}_{\overline{\bullet}}(X,\kk)$ of perverse commutative dg-algebras over $\kk$ 
given in each perversity by
\[\mathcal{IC}_{\overline{p}}(X,\kk)= 
\tau_{\leq {p}(n)}\circ \RR_{\TW} i_{n*}\circ 
\cdots\circ 
\tau_{\leq {p}(3)}\circ \RR_{\TW} i_{3*}\circ \tau_{\leq {p}(2)}\circ \RR_{\TW} i_{2*}\underline{\kk}_{U_2}.\]
\begin{definition}
A sheaf $\Aa$ of perverse commutative dg-algebras over $X$ is said to \textit{satisfy the set of axioms $[\mathrm{AX}_\kk]$} if
the conditions of Definition \ref{axioms} are satisfied after replacing $\Ee$ by $\Cc om$ and setting $R=\kk$ everywhere.
\end{definition}
 
The same proof of Lemma \ref{satisfies} shows that $\mathcal{IC}_{\overline{\bullet}}(X,\kk)$ as defined above,
satisfies the set of axioms $[\mathrm{AX}_\kk]$.
Moreover, we also have a uniqueness theorem:

\begin{theorem}\label{uniquenessICQ}
If a sheaf $\Aa$ of perverse commutative dg-algebras over $X$ satisfies the set of axioms $[\mathrm{AX}_\kk]$, then $\Aa$ is naturally quasi-isomorphic as a sheaf of perverse commutative dg-algebras
to the intersection dg-algebra $\mathcal{IC}_{\overline{\bullet}}(X,\kk)$.
\end{theorem}

When forgetting the multiplicative structures, the sheaf $\mathcal{IC}_{\overline{\bullet}}(X,\kk)$ is naturally quasi-isomorphic to Deligne's intersection complex. In particular, our construction over $\QQ$ is a sheaf-theoretic solution to the problem of commutative cochains in the intersection setting.

The main construction in \cite{CST} is a perverse dg-algebra $IA_{\ov\bullet}(X)$ defined over $\QQ$, for any given topological pseudomanifold. This perverse algebra plays the role of Sullivan's 
algebra of piece-wise linear forms, and it may be interpreted as an extension of
Sullivan's presentation of rational homotopy type to intersection cohomology.
The perverse algebra $IA_{\ov\bullet}(X)$ is defined via singular cochains, using simplicial blow-ups.
We next compare this perverse dg-algebra with our sheaf-theoretic approach.
Denote by
\[S_{\ov\bullet}(X,R):=\RR_{\TW}\Gamma(X,\mathcal{IC}_{\overline{\bullet}}(X,\QQ))\]
the perverse
 commutative dg-algebra  given by taking derived global sections of
the intersection commutative dg-algebra of $X$. We have:
\begin{theorem}\label{IAteo}
The objects $IA_{\ov\bullet}(X)$ and $S_{\ov\bullet}(X,\QQ)$ are quasi-isomorphic as perverse
 commutative dg-algebras.
\end{theorem}
\begin{proof}
We consider the following three functors from the simplex category $\Delta$ to the category of rational differential graded algebras: the normalized cochain complex $N$,
Sullivan's functor of piece-wise linear forms
\[A_{PL}([n]):={{\Lambda(t_0,\cdots,t_\alpha,dt_0,\cdots,dt_\alpha)}\over{\sum t_i-1,\sum dt_i}}\]
and the functor $T:=N\otimes A_{PL}$.
These functors define extendable universal coefficient systems in the sense of 
\cite[Definition 1.25]{CST} and by Theorem A of loc. cit., they extend to functors
$\widetilde N_{\ov p}$, $\widetilde A_{PL,\ov p}$, and $\widetilde T_{\ov p}$
from the category of filtered face sets to
commutative dg-algebras.
For our purposes, it suffices to note that there is a functor $S_f$ from topological pseudomanifolds to filtered face sets
analogous to the singular simplicial functor sending a topological space to a simplicial set, and that by definition we have 
\[IA_{\ov p}(X):=\widetilde A_{PL,\ov p}(S_{f}(X)).\]
Moreover, for any filtered face set $K$, the map
$\widetilde A_{PL,\ov p}(K)\to \widetilde N_{\ov p}(K)$
induced by the integration map is a quasi-isomorphism (see Corollary 1.39 of \cite{CST}) In particular, we obtain quasi-isomorphisms
\[\widetilde A_{PL,\ov p}(K)\stackrel{\sim}{\lra} \widetilde T_{\ov p}(K)\stackrel{\sim}{\longleftarrow} \widetilde N_{\ov p}(K).\]
After sheafification, we still have quasi-isomorphisms 
\[\widetilde \Aa_{PL,\ov p}\stackrel{\sim}{\lra} \widetilde{\mathcal{T}}_{\ov p}\stackrel{\sim}{\longleftarrow} \widetilde \Nn_{\ov p}.\]
Since $\Nn_{\ov \bullet}$ satisfies $[\mathrm{AX}]$, it follows that $\widetilde \Aa_{PL,\ov \bullet}$ also satisfies $[\mathrm{AX}]$,
which in turn implies that $IA_{\ov \bullet}(X)$ satisfies $[\mathrm{AX}_\QQ]$.
The result now follows from Theorem \ref{uniquenessICQ}.
\end{proof}

Over the field of real numbers, there is also a sheaf $\Ii\Omega^*_{\ov\bullet}(X)$
of \textit{intersection differential forms},
which satisfies the set of axioms $[\mathrm{AX}_{\RR}]$
(see \cite{BHS}, \cite{Brylinski}, \cite{Pollini},
\cite{Martinxo}). As a consequence of Theorem \ref{uniquenessICQ}
we have:

\begin{corollary}\label{IdR}
The sheaves $\Ii\Omega^*_{\ov\bullet}(X)$ and $\mathcal{IC}_{\overline{\bullet}}(X,\RR)$
are naturally quasi-isomorphic as sheaves of perverse
 commutative dg-algebras.
\end{corollary}

\begin{remark}
Theorem \ref{blowncochainsteo} and Corollary \ref{blowncochainsteo} together imply that the multiplicative structures on real blown-up cochains
$\widetilde{\mathcal{N}}^*_{\ov \bullet}(X,\RR)$ and on intersection differential forms $\Ii\Omega^*_{\ov\bullet}(X)$ agree, which was previously not known.
\end{remark}

\bibliographystyle{amsplain}
\bibliography{bibliografia}

\end{document}